\newcommand{\R}{\mathbb{R}}
\newcommand{\C}{\mathbb{C}}
\newcommand{\Ctwo}{\mathbb{C}^{2}}
\newcommand{\map}{\longrightarrow}
\newtheorem{theorem}{Theorem}[section]
\newtheorem{lemma}[theorem]{Lemma}
\newtheorem{proposition}[theorem]{Proposition}
\newtheorem{sptheorem}{Theorem}
\newtheorem{spcorollary}{Corollary}
\theoremstyle{definition}
\newtheorem{definition}[theorem]{Definition}
\newtheorem{example}{Example}
\newtheorem*{rem}{Remark}
\title{Near-symplectic 2n--manifolds}
\author{Ram{\'o}n Vera}
\email{rvera.math@gmail.com, rxv15@psu.edu}
\address{Department of Mathematics, The Pennsylvania State University, State College, PA, 16802, USA
\\
Department of Mathematical Sciences, Durham University, Science Laboratories,  South Rd., Durham DH1 3LE, UK}
\begin{document}
\begin{abstract}
\noindent We give a generalization of the concept of near-symplectic structures to $2n$ dimensions. 
According to our definition, a closed 2-form on a $2n-$manifold $M$ is near-symplectic, if it is symplectic 
outside a submanifold $Z$ of codimension 3, where $\omega^{n-1}$ vanishes. %This extends the concept known in dimension 4.  
We depict how this notion relates to near-symplectic 4-manifolds and broken Lefschetz fibrations via some examples. We define a generalized broken Lefschetz fibration, or BLF,  as a singular map with indefinite folds and Lefschetz-type singularities. We show that given such a map on a $2n$-manifold over a symplectic base of codimension 2, then the total space carries such a near-symplectic structure, whose singular locus corresponds precisely to the singularity set of the fibration. A second part studies the geometry around the codimension--3 singular locus $Z$ .  We describe a splitting property of the normal bundle $N_Z$ that is also present in dimension four.  A tubular neighbourhood for $Z$ is provided, which has as a corollary a Darboux-type theorem for near-
symplectic forms. 
%Finally, we examine two geometric structures induced by a  near-symplectic form $\omega$ on $Z$. 
\end{abstract}

\maketitle

\section{Introduction}
The motivation for near-symplectic manifolds arose from a program initiated by Taubes to study 4-manifolds equipped with symplectic forms that vanish on circles with the goal of obtaining smooth invariants of non-symplectic 4-manifolds \cite{Taubes}.  A 4-manifold is called near-symplectic if it is equipped with a closed 2-form
that is non-degenerate outside a disjoint union of circles, where
it vanishes.
These structures where studied in detail in the work of Auroux,
Donaldson and Katzarkov \cite{ADK} using broken Lefschetz fibrations
(BLFs). It was shown that there is a direct correspondence between
BLFs and near-symplectic 4-manifolds. These results extended the
theorems of Donaldson \cite{Donaldson} and Gompf \cite{Gompf} on
Lefschetz fibrations and symplectic manifolds, which in turn expanded
Thurston's theorem on symplectic fibrations. Broken
Lefschetz fibrations have found fruitful application in low-dimensional topology, for
example in holomorphic quilts \cite{Quilts} and Lagrangian matching
invariants \cite{Lagrangian, Lagrangian2}. A
relevant existence result states that every smooth closed oriented
4-manifold admits a BLF \cite{Gay, Lekili, Baykur2, BaykurTopology, Akbulut}.
The geometric structure induced by a near-symplectic 4-manifold on 
the boundary of the tubular neighbourhood of its singular locus is an overtwisted
structure as studied by Honda, Gay
and Kirby \cite{Honda, Gay2}. This shows that these manifolds are not fillable as that would require removing all singular circles, which Perutz proved not to be possible \cite{Perutz}. 

This work aims to find a good notion to generalize near-symplectic 
structures on higher dimensions. We propose a definition on manifolds of dimensions $2n$ and use singular maps that resemble broken Lefschetz fibrations. We also study the underlying geometric structure, induced by the near-symplectic 
form, on the boundaries of tubular neighbourhoods, which are 
codimension 1 submanifolds in this setting. 

In section \ref{definition forms}, we suggest a definition of a
near-symplectic structure in dimension $2n$. The goal  is to relax the
non-degeneracy condition of the symplectic form in a controlled way so that 
it degenerates exclusively on a certain submanifold. The idea starts by considering 
a closed 2-form $\omega$ on a smooth, orientable,
$2n$-manifold $M$, such that $\omega^n \geq 0$. At the points where the degeneracy 
occurs, that is where $\omega^n =0$, we impose a transversality condition on the gradient 
or differential map of $\omega$. This transversality condition tells us that the singular locus $Z$ is a submanifold 
of codimension 3 in $M$, where $\omega^{n-1}_{p}=0$ for all $p\in Z$. We call these 2-forms near-symplectic. Examples of near-symplectic $2n$-manifolds are given in \ref{examples}

Next, we study the question of the existence of these structures
using singular fibrations, analogous to BLFs. We define a generalized {\it BLF}
as a submersion $f\colon M^{2n} \rightarrow X^{2n-2}$ with two types of sets of singularities, both of which lie in $M$. First, we have
codimension 4 submanifolds of extended Lefschetz type singularities locally modelled by complex
coordinate charts $\C^n \rightarrow \C^{n-1}$ such that $(z_1, \dots, z_n) \mapsto (z_1,\dots, z_{n-2}, z_{n-1}^2 + z_{n}^2)$. The second singularities are codimension 3-submanifolds $\Sigma$ of indefinite folds modelled by real coordinate charts 
$\R^{2n} \rightarrow \R^{2n-2}, (t_1, \dots, t_{2n-3}, x_1, x_2,  x_3) \mapsto (t_1, \dots, t_{2n-3}, -x_1^2 + x_2^2 + x_3^2)$.
%We denote by $\Sigma^{2n-3} \subset M$ the set of fold singularities of $f$.  With these maps we obtain

\begin{sptheorem}\label{theorem 1}
Let $f\colon M \rightarrow X$ be a generalized BLF from a 
smooth closed oriented $2n$-manifold $M$ to a compact symplectic $(2n-2)-$manifold $(X, \omega_{X})$. Denote by $\Sigma$ the set of fold singularities of $f$. Assume that there is a class $\alpha \in H^2(M)$, such that it pairs 
positively with every component of every fibre, and that $\alpha|_{\Sigma} = [\omega_X|_{\Sigma}]$. Then, there 
is a near-symplectic form $\omega$ on $M$, with singular locus $Z$, 
equal to $\Sigma$, and symplectic fibres outside $\Sigma$.
\end{sptheorem}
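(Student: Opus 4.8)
The plan is to run the Thurston--Gompf construction of a symplectic form subordinate to a fibration, adapted to the two singularity strata of a generalized BLF: over the regular part and near the Lefschetz surface this is essentially the classical argument, while near the fold set $\Sigma$ the form is made to reproduce, fibrewise over $\Sigma$, the standard local model of a near-symplectic $4$--manifold at an indefinite fold. Write $\omega_X$ for the base symplectic form, freeing $\omega$ for the form we build. I will produce a closed $2$--form $\eta$ on $M$ with $[\eta]=\alpha$ that restricts to a positive area form on every component of every fibre off $\Sigma$ and that, near $\Sigma$, equals an explicit model vanishing transversally along $\Sigma$; then, $M$ being compact, for $K\gg 0$ the closed form $\omega:=\eta+Kf^{*}\omega_X$ is symplectic on $M\setminus\Sigma$, drops to rank $2$ with $\omega^{2}$ vanishing transversally along $\Sigma$, and satisfies $\omega|_{F}=\eta|_{F}$ on each fibre $F$ (since $f^{*}\omega_X$ annihilates fibre directions), so its fibres are symplectic off $\Sigma$. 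Thus the task reduces to building $\eta$ and checking the model near $\Sigma$.

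Over the complement of the singular set $f$ is a surface bundle; since $\alpha$ pairs positively with every fibre component and the $2$--forms positive on all fibre components form a convex open set, Thurston's averaging argument \cite{Thurston, Gompf} yields a closed $\eta_{0}\in\alpha$ that is fibrewise symplectic there, and $\eta_{0}+Kf^{*}\omega_X$ is symplectic over this region for $K$ large. Near the Lefschetz $2$--submanifold $L$, where $f$ is the holomorphic model $(z_{1},z_{2},z_{3})\mapsto(z_{1},z_{2}^{2}+z_{3}^{2})$ --- an $L$--family of ordinary Lefschetz points in the $(z_{2},z_{3})$ directions --- I would replace $\eta_{0}$ on a neighbourhood of $L$ by $\pi^{*}(\text{a closed form on }L\text{ representing }\alpha|_{L})+\varepsilon\,\omega_{\mathrm{fib}}$, with $\omega_{\mathrm{fib}}$ built from $\tfrac{i}{2}\sum_{j}dz_{j}\wedge d\bar z_{j}$, positive on the smooth fibres $\{z_{1}=a,\ z_{2}^{2}+z_{3}^{2}=b\neq 0\}$ and on each branch of the nodal fibre $b=0$, interpolating with $\eta_{0}$ over an annular region where $f$ is still a submersion; nondegeneracy in the $L$--directions is supplied by the $K\,dt_{1}\wedge dt_{2}$ part of $Kf^{*}\omega_X$ (in Darboux coordinates adapted to $f(L)$) for $K$ large. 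This is Gompf's treatment of (fibred) Lefschetz singularities and I take it as known; afterwards $\eta_{0}$ is closed, in $\alpha$, fibrewise symplectic, and $\eta_{0}+Kf^{*}\omega_X$ is symplectic off a tubular neighbourhood $N(\Sigma)$ of $\Sigma$.

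The fold locus is the genuinely new point. Since $\Sigma$ is a deformation retract of $N(\Sigma)$ and $\alpha|_{\Sigma}=0$ in $H^{2}(\Sigma)$, we have $\alpha|_{N(\Sigma)}=0$, so $\eta_{0}|_{N(\Sigma)}$ is exact --- the sole use of the hypothesis $\alpha|_{\Sigma}=0$, and precisely what allows $\eta_{0}$ to be replaced near $\Sigma$ by a model of prescribed singular type. In a fold chart $\R^{3}_{t}\times\R^{3}_{x}$ with $f(t,x)=(t_{1},t_{2},t_{3},Q(x))$, $Q(x)=-x_{1}^{2}+x_{2}^{2}+x_{3}^{2}$, the map is the identity in the $(t_{1},t_{2})$ directions, so the chart is an $\R^{2}_{t_{1},t_{2}}$--family of copies of the standard broken model $\R_{t_{3}}\times\R^{3}_{x}\to\R_{t_{3}}\times\R_{Q}$. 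Let $\omegans=\omegans(t_{3},x)$ be the near-symplectic $2$--form subordinate to that broken model (as in \cite{ADK}): closed, symplectic of rank $4$ on $\{x\neq 0\}$, vanishing transversally along $\{x=0\}$, and restricting to a positive area form on every fibre-surface $\{t_{3}=\mathrm{const},\ Q(x)=\mathrm{const}\}$, including each of the two sheets when $Q<0$ --- this two-sheeted case being precisely where positivity of $\alpha$ on \emph{every} fibre component is used. With $\eta_{0}-\omegans=d\beta$ on $N(\Sigma)$ (legitimate as both sides are exact there) and a cutoff $\chi$ equal to $1$ near $\Sigma$ and supported in $N(\Sigma)$, set $\eta:=\eta_{0}-d(\chi\beta)$: closed, in $\alpha$, equal to $\omegans$ near $\Sigma$ and to $\eta_{0}$ outside $N(\Sigma)$; through the collar fibrewise positivity is maintained, as in Thurston's theorem, by the convexity of fibrewise positivity.

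It remains to check the model near $\Sigma$, where $\omega=\omegans(t_{3},x)+Kf^{*}\omega_X$. Along $\Sigma$: since $f|_{\Sigma}$ is an immersion, $f(\Sigma)$ is an immersed hypersurface in $X^{4}$ and $\omega_X$ restricts there to a form of rank exactly $2$ (no $3$--plane in a symplectic $4$--space is isotropic), so $f^{*}\omega_X|_{\Sigma}$ has rank $2$; as $\omegans|_{\Sigma}=0$, the form $\omega|_{\Sigma}$ has rank $2$. Off $\Sigma$: $\omegans^{3}=0$ (four variables) and $(f^{*}\omega_X)^{3}=f^{*}(\omega_X^{3})=0$, so $\omega^{3}=3K\,\omegans^{2}\wedge f^{*}\omega_X+3K^{2}\,\omegans\wedge f^{*}(\omega_X^{2})$; here $f^{*}(\omega_X^{2})$ is a nonzero $4$--form off $\Sigma$, and $\omegans\wedge f^{*}(\omega_X^{2})$ is a nonzero multiple of the volume form there because $\omegans$ restricts positively to the fibres (its kernel being transverse to them), so for $K$ large $\omega^{3}\neq 0$ on $N(\Sigma)\setminus\Sigma$. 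Transversality: $\omega^{2}=\omegans^{2}+2K\,\omegans\wedge f^{*}\omega_X+K^{2}f^{*}(\omega_X^{2})$ vanishes along $\Sigma$, and its leading linear-in-$x$ part contains $K^{2}\phi(0)\,dt_{1}\wedge dt_{2}\wedge dt_{3}\wedge dQ$ with $dQ=-2x_{1}dx_{1}+2x_{2}dx_{2}+2x_{3}dx_{3}$, which is injective in $x$, so for $K$ large $\omega^{2}$ vanishes transversally along $\Sigma$. Choosing $K$ large enough globally and shrinking the interpolation regions then produces the near-symplectic form, with $Z_{\omega}=\Sigma$ and symplectic fibres on $M\setminus\Sigma$. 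The main obstacle is the fold model itself --- building $\omegans$ with all four properties at once (closed; symplectic and rank $4$ off $\{x=0\}$; transversally vanishing along $\{x=0\}$; positive on each fibre-surface including the two-sheeted pieces) --- the one-dimension-up analogue of the broken-fibration construction of near-symplectic $4$--manifolds in \cite{ADK}; the Thurston and Gompf steps and the cohomological gluing are, by comparison, standard.
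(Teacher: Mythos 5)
Your overall architecture matches the paper's: an ADK-type local model $d\bigl(x_1(x_2\,dx_3-x_3\,dx_2)\bigr)+dt_3\wedge dQ$ near the fold locus, the hypothesis $\alpha|_{\Sigma}=0$ used to splice that model into a global closed representative of $\alpha$, Thurston--Gompf for fibrewise positivity away from $\Sigma$ and at the Lefschetz stratum, and finally adding $Kf^{*}\omega_X$ with the same rank and transversality checks at the end. Your coordinate-free argument that $f^{*}\omega_X$ has rank exactly $2$ along $\Sigma$ (no $3$--plane in a symplectic $4$--space is isotropic) is a clean shortcut past the paper's Lemma 3.5, which instead symplectically normalizes $f(\Sigma)$ to $\R^{3}\times 0$ so that the fold normal form and a Darboux chart can be assumed simultaneously.

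The one step that does not work as written is the splicing $\eta:=\eta_{0}-d(\chi\beta)$ together with the claim that ``through the collar fibrewise positivity is maintained \dots by the convexity of fibrewise positivity.'' In the transition region one has $\eta|_{F}=(1-\chi)\eta_{0}|_{F}+\chi\,\omegans|_{F}-(d\chi\wedge\beta)|_{F}$, and this is a convex combination only if $\chi$ is constant on each fibre. It cannot be: $N(\Sigma)$ is a tubular neighbourhood of the $3$--manifold $\Sigma$ inside $M$, the fibres meet $\Sigma$ in isolated points and extend far outside $N(\Sigma)$, so any cutoff supported in $N(\Sigma)$ varies along the fibres, and the term $d\chi\wedge\beta$ restricted to a fibre is an uncontrolled error (shrinking the collar only makes $d\chi$ larger, and $\beta$ is a primitive of a fixed form, so it cannot be made small). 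This is precisely why the paper reverses the order of operations: it first modifies the global representative so that it \emph{equals} the local model $\omega_{a}$ on a neighbourhood $U$ of $\Sigma$ (temporarily giving up fibrewise positivity away from $U$), and then runs the Thurston step relative to $U$ --- choosing on each fibre $F_{q}$ a positive form $\sigma_{q}$ agreeing with the model on $F_{q}\cap U_{1}$, writing $\tilde\tau_{q}=\tau+d\mu_{q}$ with $\mu_{q}$ vanishing on $V_{q}\cap U_{2}$, and patching with a partition of unity $\rho_{q}\circ f$ pulled back from the base, so that the restriction to each fibre genuinely is the convex combination $\sum_{q}(\rho_{q}\circ f)\,\tilde\tau_{q}|_{F}$. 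If you restructure your collar interpolation in this order, the rest of your argument goes through.
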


The proof appearing in section \ref{proof theorem 1} starts by constructing an explicit closed 2-form on the fibres that vanishes at the set of singularities of the mapping. Then it pulls back the symplectic form of the base. 
Both 2-forms are combined and glued together into a global 2-form representing the class  $\alpha$. This statement follows a similar line of reasoning as Auroux, Donaldson, and Katzarkov's  \cite{ADK} construction of near-symplectic forms using BLFs in dimension 4.

The last section concerns the geometric structure on the boundary of the neighbourhood of the singular locus. We study two geometric structures that appear on a codimension 1 submanifold of $M$. Firstly, we look at Hamiltonian structures. A Hamiltonian structure on an $(2n-1)$-dimensional manifold $N$ is a closed 2-form $\Omega$ such that $\Omega^{n-1}\not=0$ everywhere.  In the presence of a Hamiltonian structure, there is a 1-dimensional distribution associated to $\Omega$ through its kernel $\ker(\Omega)$.  A 1-form $\lambda$ is called a stabilizing 1-form, if $\lambda \wedge \Omega^{n-1}>0$ and $\ker(\Omega) \subset \ker(d\lambda)$. The pair $(\lambda, \Omega)$ is known as a stable Hamiltonian structure. 
A near-symplectic form naturally equips the singular locus $Z$ with a Hamiltonian structure.  Moreover, if $Z$ carries a stable Hamiltonian structure so does the boundary of a small tubular neighbourhood in case that the normal bundle is trivial.  

We conclude by examining the  properties of the normal bundle of $Z$ that are defined by the near-symplectic form.  As in dimension 4, there is a decomposition of the normal bundle $N_Z$ in two subbundles, a rank 1 bundle $L^{-}$ and a rank 2 bundle $L^{+}$.  In section \ref{neighbourhood thm}, we give a neigbourhood theorem for near-symplectic forms around their singular locus. 
\begin{sptheorem}\label{theorem 2}
Let $(M_0, \omega_0), (M_1, \omega_1)$ be two near-symplectic manifolds with diffeomorphic singular locus $Z_0 \cong Z_1$ and equal symplectic forms on them, $\omega_0|_{Z_0} = \omega_1|_{Z_1}$.  Assume that there is an isomorphism on the normal bundles  $N_{Z_{0}} \simeq N_{Z_{1}}$, such that it restricts to an isomorphism on the positive subbundles $L^{+}_{0} \simeq L^{+}_{1}$.  Denote by $\mathcal{U}_{0}\subset M_0$ and $\mathcal{U}_{1}\subset M_1$ the corresponding tubular neighbourhoods of $Z_0$ and $Z_1$.  Then, there is a homeomorphism $\varphi\colon \mathcal{U}_{0} \rightarrow \mathcal{U}_{1}$ that is a diffeomorphism away from $Z$, such that $\varphi^{*}\omega_1 = \omega_0$.
\end{sptheorem}
As a corollary, we obtain a local Darboux-type theorem which describes a near-symplectic form around a point of $Z$. 
\begin{spcorollary}
Let $(M, \omega)$ be a near-symplectic manifold and $p$ a point of the singular locus $Z\subset M$.  There is a coordinate neighbourhood $U\subset M$ around $p$, such that on $U$ 
%\begin{equation}
$$ \omega = \omega_Z - 2 x_1 (dz_0 \wedge dx_1 + dx_2 \wedge dx_3 ) +   x_2 (dz_0 \wedge dx_2 - dx_1 \wedge dx_3 ) + x_3 (dz_0 \wedge dx_3 + dx_1 \wedge dx_2) $$
%\end{equation}
  %There is a smooth family of near-symplectic forms $\lbrace \omega_t\rbrace_{t\in[0,1]}$
where $\omega_Z:= i^{*}\omega$ is a closed 2-form of maximal rank on $Z$. 
\end{spcorollary}

\section*{Acknowledgements}
I would like to express my thanks to my advisor, Dirk Sch{\"u}tz, for all his
support and valuable discussions. I am also very grateful to Denis Auroux and
Yanki Lekili for many interesting questions and remarks at various stages
of this work. Special thanks to Klaus Niederkr{\"u}ger for his comments on the geometry around the singular locus.  I also want to thank Tim Perutz for his email correspondence on the definition of near-symplectic forms. 

\noindent This work was supported by DDF09/FT/000092234 and by CONACYT 212591.

\section{Near-symplectic forms}
We first recall the definition of near-symplectic forms on 4-manifolds \cite{ADK}.

\begin{definition}
Let $X$ be a smooth oriented 4-manifold. Consider a closed 2-form $\omega \in \Omega^2(X)$ such that $\omega^2\geq 0$ and such that $\omega_p$ only has rank 4 or rank 0 at any point $p \in X$, but never rank 2. The form $\omega$ is called {\it near-symplectic}, if it is non-degenerate or it vanishes transversally along circles.  That is, for every $p\in X$, either

\begin{enumerate}
 \item $\omega_p^2>0$, or
 \item $\omega_p = 0$, and $\textnormal{Rank}(\nabla \omega_p) = 3$, where $\nabla\omega_p\colon T_pX \rightarrow \Lambda^2 T^{*}_{p} X$ denotes the intrinsic gradient of $\omega$.
\end{enumerate}
\end{definition}

It follows from the condition on $\nabla \omega_p$ that the singular locus $Z_{\omega}$ is a smooth 1-submanifold of $X$ \cite{ADK}, \cite{Perutz}.  A prototypical example of a near-symplectic 4-manifold is given by $X = S^1\times Y^3$, where $Y$ is a closed 3-manifold.  Consider a closed 1-form $\alpha\in\Omega^1(Y)$ with indefinite Morse critical points and let $t$ be the parameter of $S^1$.  The 2-form $\omega = dt\wedge \alpha + \ast(dt\wedge \alpha)$ is near-symplectic, where the Hodge $\ast$-operator is defined with respect to the product metric on $S^1$ and $Y$.  The singular locus $Z_{\omega} = \left\{ p \in X \mid \omega_p=0 \right\}$ is in this case  $S^1\times \textnormal{Crit}(\alpha)$.

\subsection{Near-symplectic 2n-manifolds}\label{definition forms}
The following definition of near-symplectic forms in higher dimensions is  due to Tim Perutz.  The author would also like to acknowledge that the coming exposition follows very closely a message from Perutz. 

Let $M$ be an oriented smooth $2n$-manifold, and $\omega\in \Omega^{2}(M)$ a closed 2-form such that
\begin{equation}\label{assumption}
\omega^n \geq 0
\end{equation}
everywhere.  Suppose that at some point $p$, the kernel $K$ of $\omega$ seen as a subspace of the tangent space has dimension $4$.
$$K = \lbrace v\in T_pM \mid \omega_p (v, \cdot) = 0 \rbrace $$
We have an intrinsic gradient $\nabla \omega \colon K \rightarrow \Lambda^2 T_{p}^{*} M$.  We can restrict this map to bivectors in $K$ to get a map $K\rightarrow \Lambda^2 T_p^{*}M \rightarrow \Lambda^2 K^{*}$, where the map $\Lambda^2 T_p^{*}M \rightarrow \Lambda^2 K^{*}$ corresponds to the dual of the inclusion $K\hookrightarrow T_p M$ in the corresponding exterior algebra. We denote this composition as
\begin{equation}\label{DK}
D_{K} \colon K \rightarrow \Lambda^{2} K^{*}
\end{equation}
Then the wedge square gives us a non-degenerate quadratic form $q\colon \Lambda^2 K^{*}\otimes \Lambda^2 K^{*} \rightarrow \Lambda^{4} K^{*}$.

\begin{proposition}
The image $\textnormal{Im} (D_K)$ is dimension at most 3.  In local coordinates this is a positive semi-definite subspace of $\Lambda^2 K^{*}$ with respect to the wedge square form. That is, the 4-form $D_K(v) \wedge D_K(v) \geq 0$ for $v\in K$.
\end{proposition}
\begin{proof}
Take an arbitrary tangent vector $v\in T_p M$ and choose coordinates such that $p=0$ is the point at the origin. By our assumption on $\omega$, we have $\omega^n (t\cdot v) \geq 0$ for all scalars $t$, where $t\cdot v$ points into the manifold.   Yet, if we use a Taylor expansion to write  $\omega (t\cdot v) = \omega(0) + t\cdot \nabla_v \omega(0) + O(t^2)$   and take $v\in K$, we have
$$\omega^n (t\cdot v) = \underbrace{\omega^n(0)}_{=0} + t {n\choose 1} \underbrace{\omega^{n-1}(0)}_{=0} \wedge \nabla_v \omega (0) + t^2  {n\choose 2}  \omega(0)^{n-2} \wedge \left(\nabla \omega_v (0)\right)^2 + O(t^3)$$
The forms $\omega^n(0)$ and $\omega^{n-1}(0)$ vanish since they necessarily take vectors $\partial_{k_{1}}, \dots , \partial_{k_{4}}$ from $K$, whereas in the linear combination of $\omega^{n-2}$ there will be vectors outside from $K$ where the form remains non-zero. This gives us
$$\omega^n (t\cdot v) = {n\choose 2} \cdot t^2 \cdot \omega(0)^{n-2} \wedge \left(\nabla \omega_v (0)\right)^2 + O(t^3)$$
We work in local coordinates using the tangent space at $p=0$ for the local coordinate system.  The space $T_pM \slash K$ has a symplectic structure and we can combine an orientation on it with an orientation of $K$ to get an orientation of $T_pM$, which has a natural orientation. With respect to this chosen orientation we want to show that $D_K(v) \wedge D_K(v) \geq 0$ for a $v\in K$.  Let $e_i = \left(\frac{\partial}{\partial x_i}\right)_{1\leq i \leq 2n}$ be an oriented basis.  Since $\omega^n(tv) \geq 0$ from our original consideration \eqref{assumption}, then we have that $\omega^n (t\cdot v) (e_1, \dots , e_{2n}) \geq 0$, thus
$$\omega^n (t\cdot v) \approx C \cdot \omega(0)^{n-2} \wedge \left(\nabla \omega_v (0)\right)^2 (e_1, \dots, e_{2n}) \geq 0 $$
with the constant $C = {n\choose 2} \cdot t^2$. The form $\omega(0)^{n-2}$ has a sign on the complementary subspace to $K$ in $T_pM$, since we have chosen an orientation.  However, from \eqref{DK} by restricting to vectors in $K$, then
$$\omega^n (t\cdot v) \approx C\cdot t^2 \cdot \omega(0)^{n-2}(e_1, \dots, e_{2n-4})  \wedge \underbrace{\left(\nabla \omega_v (0)\right)^2}_{D_K(v)\wedge D_K(v)} (\partial_{k_{1}}, \dots , \partial_{k_{4}}) \geq 0  $$
We can see now that the image of $D_K$ is a positive semi-definite subspace of $\Lambda^2 K^{*}$.  Hence $\textnormal{Im} (D_K)$ has dimension at most 3.  In particular, $D_K(v) \wedge D_K (v)$ is a non-negative 4-form with respect to $K$.
\end{proof}

\begin{definition}\label{near-symplectic}
The 2-form $\omega \in \Omega^2 (M^{2n})$ is {\it near-symplectic}, if it is closed, $\omega^n \geq 0$, and at a point $p$ where $\omega^n = 0$, one has that the kernel $K$ is 4-dimensional and that the Im$(D_K)$ has dimension 3.
\end{definition}
\begin{rem}
Informally, the definition implies that a closed 2-form $\omega \in \Omega^2(M)$ is {\it near-symplectic}, if for every $p\in M$, either
\begin{itemize}
 \item[(i)] $\omega_p^n > 0$, or
 \item[(ii)] $\omega_p^{n-1} = 0$, but $\omega_p^{n-2} \not= 0$ at a codimension 3 submanifold of $M$.
\end{itemize}
In the remaining part of this section we will explain why the degeneracy locus is a codimension 3 submanifold.
\end{rem}
The image of the map $D_{K} \colon K \rightarrow \Lambda^{2} K^{*}$ is of dimension 3, thus it has 1-dimensional kernel. If we look at $\omega^{n-1}$ then it vanishes at $p$, since it takes at least 2 vectors from $K$. Moreover, $G= \nabla \omega^{n-1}(p)$ is 
%
%%%%%%%%%%%%%%%%%%%%%%%%%%%%%%%%%%%%%%%%%%%%%%%%%%%%%%%%%%%%%%%%%%%%%%%%%%%%%%%%%%%%%%%%%%%%%%%%%%%%%%%%%%%%%%%%%%
%
%  ADD   INTRINSICALLY IN FINAL VERSION, BEORE GRADIENT SO THAT IT SAYS Moreover, it is intrinsically defined
%%
%%%%%%%%%%%%%%%%%%%%%%%%%%%%%%%%%%%%%%%%%%%%%%%%%%%%%%%%%%%%%%%%%%%%%%%%%%%%%%%%%%%%%%%%%%%%%%%%%%%%%%%%%%%%%%%%%%
%
%intrinsically 
defined.  Choose coordinates $(x_k)$ so that $K$ is defined by the vanishing of all but the last four $dx_k$. Take the derivative of $\omega^{n-1}$ and apply the chain rule to obtain
$$G= (n-1) \omega(p)^{n-2} \nabla \omega_p $$
where the gradient on the right is defined using the coordinates. The 2-form $\omega$ is symplectic on the submanifold $Z$ where the last 4 coordinates are zero.  We can adjust the coordinates to Darboux form, so that $\omega$ is constant on $Z$, that is, $\omega|_{p} = dx_1 \wedge dx_2 + \dots + dx_{2n-5}\wedge dx_{2n-4}$ for $p\in Z$. Hence $\nabla \omega_p (\partial x_{i}) = 0$ for $i = 1, \dots , 2n-4$. However,
$$ \ker G = \ker (\nabla \omega_p)$$
and now one sees that this is a codimension 3 subspace containing the line $\ker (D_K)$.  Hence the degeneracy locus $Z$ of the near-symplectic form is a codimension 3-submanifold of $M$.

\begin{lemma}
The singular locus $Z_{\omega} = \{ p\in M \mid \omega_{p}^{n-1} = 0 \}$ is a codimension 3 submanifold of $M$.
\end{lemma}
\begin{rem}
The property of $\omega|_{V \setminus Z}^n>0$ guarantees that the whole $V^{2n}$ is orientable. This is due to the fact that $Z$ is a submanifold of codimension 3.  In fact, it follows from a standard algebraic topological argument that this orientability property  is true on any dimension if the codimension of the submanifold is greater or equal to two.  That is to say, if $\omega$ is a 2-form on a $2n$-manifold $V$, $K$ is a $k$-dimensional submanifold of $V$, and $\omega^n >0$ on $V\setminus K$, then $V$ is oriented if $\textnormal{codim}(K) \geq 2$.
\end{rem}

\begin{rem}
In dimension 4, near-symplectic structure are related to self-dual harmonic forms.
An obvious obstacle in dimensions 6 and above is that there is no analogue of self-duality
for 2-forms. 
Some local models of near-symplectic forms on 6-manifolds $M^6$ seem
to indicate that near-symplectic forms could be equivalent to $\omega = \ast \omega^2$ for some generic metric, outside the singular locus $Z$.
\end{rem}

\subsection{Examples}\label{examples}
\begin{example}
On $\R^{2n}$ with coordinates $(q_1, p_1, \dots, q_{n-2}, p_{n-2}, x_0, x_1, x_2, x_3)$, the following 2-form is near-symplectic
\begin{align}
\omega =& - 2x_1 (dx_0 \wedge dx_1 + dx_2 \wedge dx_3) + x_2 (dx_0 \wedge dx_2 - dx_1 \wedge dx_3)
\nonumber
\\
& + x_3 (dx_0 \wedge dx_3 + dx_1 \wedge dx_2) + \sum_{i=1}^{n-2} dq_i \wedge dp_i
\nonumber
%dt_1 \wedge dt_2 + \dots + dt_{2n-5} \wedge dt_{2n-4}  - 2x_1 (dt_3 \wedge dx_1 + dx_2 \wedge dx_3)
%\nonumber
%\\
 %      & + x_2 (dt_3 \wedge dx_2 - dx_1 \wedge dx_3) + x_3 (dt_3 \wedge dx_3 + dx_1 \wedge dx_2)
%\nonumber
\end{align}
The singular locus where $\omega^{n-1} = 0$ is given by $Z_{\omega} = \left\{ p\in \R^{2n} \mid x_1 = x_2 = x_3 = 0 \right\}$ and $\omega^n > 0$ away from $Z_{\omega}$.
 \end{example}

For the next example, let $(Q, \bar{\omega})$ be a symplectic manifold of and $\phi\colon Q \rightarrow Q$ a symplectomorphism. Form a mapping torus $N = Q (\phi) = Q\times [0,1] \slash (x,0) \simeq (\phi(x), 1)$. The mapping torus is in particular a fibre bundle over $S^1$ and it carries a non-vanishing closed 1-form $\beta$. %=dt$.  
We can extend $\bar{\omega}$ from $Q$ to $N$.  There is a 2-form defined on $Q\times \R$.  The $\mathbb{Z}$-action on this manifold given by $(x,t) \mapsto (\phi(x), t+1)$ leaves the 2-form invariant, hence it descends to the quotient. Thus, $\bar{\omega}$ is a well-defined 2-form on $N$ that is symplectic on $Q$.

\begin{example}
Consider the $2n$-manifold $M= N\times Y$ obtained by crossing $N$ with a closed, connected, orientable, smooth 3-manifold $Y^3$.  Let $\alpha \in \Omega^1(Y)$ be a closed 1-form with indefinite Morse singular points (i.e. no maximum and no minimum). By Calabi's and Honda's theorems \cite{Honda}, \cite{Honda2}  this form can be replaced by an intrinsically harmonic 1-form lying in the same cohomology class and having the same Morse numbers. Thus, we may assume that $\Delta \alpha = 0 $ for some Riemannian metric on $Y$. Equip the 2n-manifold with the following 2-form:
\begin{equation}
\omega = \beta \wedge \alpha + \bar{\omega} + \left(\ast_{Y} \alpha\right)
\end{equation}
This 2-form is near-symplectic on $M$ and its singular locus is $Z_{\omega} = N \times \textnormal{Crit}(\alpha)$.
\end{example}

%%%%%%%%%%%%%%%
%%%%%%%%%%%%%%%

\section{Fibrations}
\subsection{Near-symplectic fibrations}
We recall the definition of broken Lefschetz fibrations  on dimension four.  On a smooth, closed 4-manifold $X^4$, a {\it broken Lefschetz fibration} or {\it BLF} is a smooth map to the 2-sphere, $f\colon X^4 \rightarrow S^2$, with two types of singularities:
\begin{enumerate}
\item isolated {\it Lefschetz-type} singularities, contained in the finite subset of points $B\subset X^4$, which are locally modeled by complex charts
$$ \Ctwo \map \C  \quad , \quad (z_1, z_2) \longmapsto z_{1}^{2} + z_{2}^{2}$$
\item {\it indefinite fold} singularities, also called {\it broken}, contained in the smooth embedded 1-dimensional submanifold $\Gamma \subset X^4 \setminus B$, which are locally modelled by the real charts
$$ \R^{4} \map \R^{2} \quad , \quad  (t,x_1,x_2,x_3) \longmapsto (t, x_{1}^{2} + x_{2}^{2} - x_{3}^{2})$$
\end{enumerate}
\begin{figure}[htbp]
\begin{center}
\resizebox{.41\textwidth}{!}{
 \includegraphics{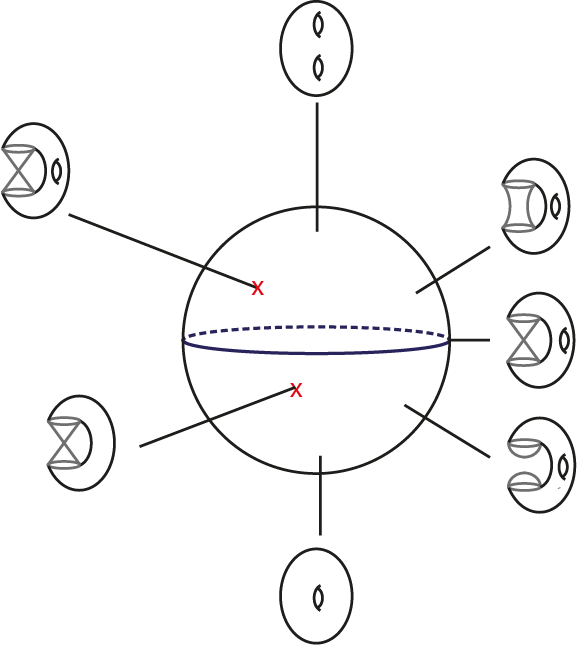} }
\caption{Example of a BLF with 1 circle of folds and 2 Lefschetz points}
\end{center}
\end{figure}
In \cite{ADK} these mappings were studied under the name of singular Lefschetz fibrations. It was shown that there is a relation between BLFs and near-symplectic manifolds. Up to blow ups, a near-symplectic 4-manifold $X$ can be decomposed into a BLF. The other direction is given by the following theorem.

\begin{theorem}[\cite{ADK}]
Given a BLF with singularity set $\Gamma \sqcup B$ on a closed oriented 4-manifold $X$, with the property that there is a class $\alpha \in H^{2}(X)$, such that it pairs positively with every component of every fiber, then $X$ carries a near-symplectic structure with zero-locus being equal to the set of broken singularities of $f$.
\end{theorem}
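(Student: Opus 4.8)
The plan is to adapt Thurston's construction of symplectic forms on fibrations, in the version used by Auroux--Donaldson--Katzarkov: produce a closed 2-form that is non-degenerate on the fibres, graft explicit local models onto it near the Lefschetz points and near the broken circles, and finally inflate by a large pull-back from the base to gain non-degeneracy transverse to the fibres.

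Over the regular part of $f$, where it is a genuine bundle of compact surfaces, the hypothesis $\langle\alpha,[F]\rangle>0$ for every component $F$ of every fibre is exactly the input of Thurston's fibration lemma, which then gives a closed 2-form $\omega_{\mathrm{fib}}$ restricting to a positive area form on each fibre component. Near each Lefschetz point the map is the holomorphic normal form $(z_1,z_2)\mapsto z_1^2+z_2^2$, and a suitably scaled standard K\"ahler form $\omega_{\mathrm{Lef}}$ makes $f$ a symplectic Lefschetz fibration on that chart. Near a broken circle $\Gamma$ I would use Calabi's and Honda's theorems --- as in Example~1 above --- to arrange that the Morse $1$-form $\beta$ underlying the transverse fold model is intrinsically harmonic, and then put $\omega_{\mathrm{fold}}:=dt\wedge\beta+\ast(dt\wedge\beta)$ on the product neighbourhood $S^1\times Y^3_{\mathrm{loc}}$, with the Hodge star of the product metric. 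Since $\beta$ is harmonic, $\omega_{\mathrm{fold}}$ is closed and self-dual, and a direct computation shows that it vanishes exactly along $\Gamma$, that its intrinsic gradient there has rank $3$ with semi-definite image, that it is symplectic off $\Gamma$, and that it restricts to $|\nabla\beta|$ times the induced area form on each nearby regular fibre, hence positively.

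It remains to assemble these pieces. On the overlaps all the forms in play are closed, symplectic, and positive on fibres; after rescaling the local models so that their periods match those of $\omega_{\mathrm{fib}}$ on the (cyclic or trivial) second cohomology of the overlap annuli, the pairwise differences are exact, and cutting off primitives gives a single closed 2-form $\eta$ on $X$ that equals $\omega_{\mathrm{fold}}$ near $\Gamma$, equals $\omega_{\mathrm{Lef}}$ near the Lefschetz points, and is non-degenerate on fibres throughout, already symplectic on a full neighbourhood of $\Gamma$. Taking $\omega_{\mathrm{ns}}:=\eta+k\,f^{*}\sigma$ for an area form $\sigma$ on $S^2$ and $k\gg0$, with the pull-back term tapered off near $\Gamma$ (where $\eta$ is already symplectic off $\Gamma$) by damping a local primitive of $f^{*}\sigma$ on a thin annular neighbourhood of $f(\Gamma)$, keeps $\omega_{\mathrm{ns}}$ closed, equal to $\omega_{\mathrm{fold}}$ along $\Gamma$, and non-degenerate everywhere else. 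The result is a closed 2-form, symplectic outside $\Gamma$ and near-symplectic along $\Gamma$, with zero-locus precisely the broken set.

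The genuine difficulty, I expect, is concentrated in the neighbourhood of the broken circles: reconciling $\omega_{\mathrm{fold}}$ with $\omega_{\mathrm{fib}}$ across the region where the fibres are the degenerating Morse--Bott ones, and carrying out the tapering, so that the glued form simultaneously keeps exactly $\Gamma$ as its zero set, retains there the rank-$3$ semi-definite intrinsic gradient, stays non-degenerate on the rest of $X$, and remains closed. This is precisely where the positivity hypothesis $\langle\alpha,[F]\rangle>0$ is used non-formally: beyond enabling Thurston's lemma, it forces the fibre areas on the two sides of each fold --- related through the vanishing cycle and the linking sphere of $\Gamma$ --- to be mutually consistent, so that the local models can be rescaled to match before cutting off. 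The Lefschetz points and the bulk fibred region, by contrast, are handled by the now-standard Gompf--Thurston technique.
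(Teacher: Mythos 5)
You should note first that the paper does not actually prove this statement: it is quoted from \cite{ADK} without proof, and the only internal point of comparison is the proof of Theorem 1, which the paper explicitly models on the ADK argument. Measured against that, your overall architecture is the right one: a Thurston form on the regular part, explicit local models grafted in at the Lefschetz points and along the broken circles, a cohomological matching of periods via the class $\alpha$ to cut off the differences, and inflation by $K\,f^{*}\sigma$. The Lefschetz-point and patching steps are fine in outline.

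The genuine gap is in your local model along the broken circles. You set $\omega_{\mathrm{fold}}=dt\wedge\beta+\ast(dt\wedge\beta)$ and assert it is closed ``since $\beta$ is harmonic,'' with harmonicity to be arranged by Calabi's and Honda's theorems. In the fold chart of a BLF the transverse $1$-form is forced to be $\beta=dQ$ with $Q=x_1^2+x_2^2-x_3^2$ (this is what it means to use the given fold normal form), and then $d\ast(dt\wedge dQ)=(\Delta Q)\,dx_1\wedge dx_2\wedge dx_3=2\,dx_1\wedge dx_2\wedge dx_3\neq 0$: the form is not closed, because the fold quadratic is not traceless. Calabi--Honda cannot repair this: those theorems concern closed, transitive, non-exact Morse $1$-forms on closed $3$-manifolds and produce a harmonic representative of a fixed nonzero class, whereas here $\beta$ is exact and local, and any metric-dependent replacement will in general no longer equal $dQ$ for the given map $f$, so positivity on the actual fibres of $f$ and the identification of the zero set with $\Gamma$ both become unsubstantiated. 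What ADK do, and what the paper does in its $6$-dimensional analogue, is avoid the metric entirely: they write down an explicit exact local $2$-form --- in the paper's notation $\tau_{p}=d\bigl(\chi(t)\,x_{1}(x_{2}\,dx_{3}-x_{3}\,dx_{2})\bigr)$ --- which is automatically closed, vanishes exactly on the fold locus with the required rank-$3$ transversal intrinsic gradient, and is positive on fibres; and they separately normalize the fold chart so that $f^{*}\omega_{\mathrm{base}}$ takes a standard shape (the symplectic straightening lemma in the proof of Theorem 1). You would need to substitute such an explicit closed model for your self-dual one, and you should also address the global structure of the neighbourhood of a broken circle --- the monodromy around $\Gamma$ may mix the positive and negative eigendirections of $Q$ --- which your untwisted product $S^{1}\times Y^{3}_{\mathrm{loc}}$ silently assumes away.
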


Our theorem \ref{theorem 1} shows a similar statement on $2n$ dimensions.  Now we define a map that will play an analogous role of a BLF two dimensions higher. This map is a submersion with folds and Lefschetz-type singularities. Notice that submersion with folds are stable if the map $f$ restricted to its fold set is an immersion with normal crossings \cite{Guillemin}. By stable we mean that any nearby map $\tilde{f}$ is identical to $f$ after changes of coordinates.

\begin{definition}\label{genBLF}
Let $M$ be a smooth, closed $2n$-manifold $M$ and $X$ a smooth, closed $(2n-2)$-manifold. By a {\it broken Lefschetz fibration} we mean a submersion $f\colon M \rightarrow X$ with two type of singularities:
\begin{itemize}
\item[(1)] ''extended'' {\it Lefschetz-type} singularities, locally modelled by
\begin{align}
\C^n &\rightarrow \C^{n-1}
\nonumber
\\
(z_1, \dots , z_n) &\rightarrow (z_1, \dots , z_{n-2}, z_{n-1}^2 + z_{n}^2)
\nonumber
\end{align}
\end{itemize}
These singularities are contained in codimension 4 submanifolds cross a Lefschetz singular point.  Singular fibres present an isolated nodal singularity, but nearby fibres are smooth and convex.
\begin{itemize}
\item[(2)] {\it indefinite fold} singularities, locally modeled by
\begin{align}
\R^{2n} &\rightarrow \R^{2n-2}
\nonumber
\\
(t_1, \dots, t_{2n-3}, x_1,x_2,x_3)& \mapsto (t_1, \dots, t_{2n-3},  -x_1^2 + x_2^2 + x_3^2)
\nonumber
\end{align}
\end{itemize}
The fold locus is an embedded codimension 3 submanifold, and we denote it by $\Sigma$.  Singular fibres present a nodal singularity, but this time crossing $\Sigma$ changes the genus of the regular fibre by one.
\begin{figure}[htbp]
\begin{center}
\resizebox{.45\textwidth}{!}{
 \includegraphics{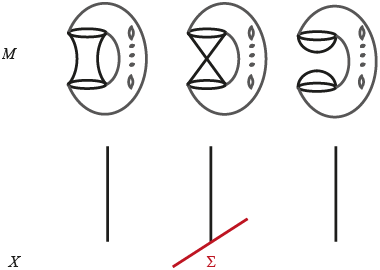} }
\caption{Fibres: regular (left, right) and singular (middle)}
\end{center}
\end{figure}

\noindent If we consider the total space to be near-symplectic with regular fibers being symplectic, and the fold locus being $Z_{\omega}$, then we will refer to the previous map $f\colon M \rightarrow X$ as a {\it near-symplectic fibration}.
\end{definition}
%
%%%%%%%%%%%%%%%%%%%%%%%%%%%%%%
%%
%%    PROOF OF THEOREM 1
%%
%%%%%%%%%%%%%%%%%%%%%%%%%%%%%%
\subsection{Proof of theorem \ref{theorem 1}} \label{proof theorem 1}
\noindent {\bf{Step 1: Constructing the local 2-form}}
\\
First we want to define the local near-symplectic form near the singular sets $\Sigma \sqcup C$, where $\Sigma$ denotes the singularity set of folds and $C$ the set of extended Lefschetz-type singularities. We begin by defining a singular symplectic form vanishing at $\Sigma$, and then we pull back the symplectic form of the base. Let $(t_1, \dots , t_{2n-3}, x_1,x_2,x_3) $ be coordinates around a fold point $p\in \Sigma$ of index 1, locally modeled by $\tilde{f}\colon (t, x) \mapsto (t_1, \dots , t_{2n-3}, -x_1^2 + \frac{1}{2} (x_2^2 + x_3^2))$. Since the fibres are 2-dimensional, we can take a similar local model as the near-symplectic forms on 4-manifolds.  Define the following 2-form on a piece of the tubular neighbourhood of $\Sigma$ containing $p$:
\begin{equation}\label{form on fibers}
\tau_{p} = d \left( \chi(t) x_{1} (x_{2} dx_{3} - x_{3} dx_{2}) \right)
\end{equation}
This 2-form is closed, vanishes at the singularity set,  is non-degenerate outside $\Sigma$, and evaluates positive on the fibres. 
%The positivity of $\tau_p$ on the fibres of $f$ can be checked by integrating $\tau_p$ over a piece of the fibre.  
Here, $\chi(t)$ is a smooth cut-off function depending on coordinates on $Z$.  This cut-off function will help us in the gluing process when summing up the 2-forms $\tau_{p_{i}}$ to build a local 2-form on the whole tubular neighbourhood of $\Sigma$. We sum up the forms $\tau_{p_{k}}$ over a finite cover of $\Sigma$, and pullback the symplectic form from the base.  We obtain
\begin{equation}
\omega_{A} = \sum_{p_{k}} \tau_{p_{k}} + f^{*}\omega_X
\end{equation}
This closed 2-form is defined on the tubular neighbourhood of $\Sigma$.  It is non-degenerate outside $\Sigma$ and posiitve on the fibres.  At the degeneracy points, $K_p = N_p\Sigma \oplus \varepsilon_{p}$ is of dimension four, where $\varepsilon = \ker(f^{*}\omega_X) \subset T\Sigma$. 

Around the elements of $C$, where $f$ is given by $(z_1, \dots, z_{n}) \mapsto (z_1, \dots, z_{n-2}, z_{n-1}^2 + z_{n}^2)$, we can choose disjoint neighbourhoods $B_k$ such that $\omega_0|_{B_k} = \omega_{\mathbb{C}^{n-1}}$. Since near $C$ we are in a situation similar to a Lefschetz fibration we can proceed as in \cite{ADK},\cite{Gompf}. For any $v_1, v_2 \in T_p F$, we get $\omega_{0}|_{B_k}(v_1,v_2)>0$ away from the singularity. The symplectic form $\omega_A|_{B_k}$ can be extended to the fibre $F_q$ as a symplectic form for all $q \in f(B_k) \subset X$. 
\\
\\
\noindent {\bf{Step 2: Extension over the neighbourhoods of the fibres}}
\\
In this step we want to construct local 2-forms on the neighbourhood of the fibres.  We extend the 2-form to a local model over the neighbourhood of the fibres, such that it agrees with $\omega_{A}$ near $\Sigma \sqcup C$. Let $U$ be the tubular neighbourhood of $\Sigma \sqcup C$.  Choose a closed 2-form $\zeta \in \Omega^{2}(M)$ with a class being represented by $\alpha$. Since $\alpha|_{\Sigma}= \omega_X|_{\Sigma} \in H^2(\Sigma)$, over $U$ there exists a 1-form $\bar{\mu} \in \Omega^{1}(U)$, such that $\omega_{A} - \zeta = d\bar{\mu}$.  We extend now $\bar{\mu}$ to an arbitrary 1-form  on the manifold, $\mu \in \Omega^{1}(M)$,  supported in a neighbourhood $W$ of $U$.  By substituting $\eta = \zeta + d\mu$ on $U$, we can regard $\eta$ to be $\omega_{A}$ when restricted to $U$.

By assumption, we have a positive pairing, $\langle \alpha, F \rangle > 0$ over each component of the fibre, $[\eta] = \alpha$, and the fibres have a symplectic form $\sigma_F$.  We equip the fibres with a closed singular 2-form $\sigma_q$ such that 
\begin{itemize}
 \item [(a)] $\sigma_{q}|_{F_{q} \cap U_{1}} = \eta$. By restricting $\sigma_q$ to $U$, this 2-form is near-symplectic, since  $\eta|_{U} = \omega_{A}$. The form $\sigma_{q}$ is defined on the fibre, so $\sigma_q|_{F_{q} \cap U_{1}}$ is near-symplectic.
\item [(b)] $\sigma_{q}|_{F_{q}}$ is positive on $F_q$, where the fibre is smooth.  This can be seen by considering two subsets of the fibre. Take a small open neighbourhood around the singularity and a second larger one covering the rest of the fibre.  On the first neighbourhood around the singularity, the 2-form $\omega_A$ evaluates positively except at the singular point. On the second subset where the fibre is smooth, the area form of $F_q$ evaluates positively. 
\item [(c)] $\int_{F} \sigma_{q} = \langle \alpha , F \rangle > 0$
, since $[\sigma_{q} - \eta|_{F_{q}}] = 0$ in $H^{2}(F_{q}, F_{q}\cap U_{1}) \stackrel{PD}\simeq H_{0}(F_{q}, F_{q} \cap U_{1})\simeq 0$,  assuming $F_{q}$ is connected.  Then $(\sigma_{q} - \eta|_{F_{q}})$ is exact in $F_{q}\cap U_{1}$, that is $[\sigma_{q}]= [\eta] = \alpha$.
\end{itemize}

Now, we describe some properties of the neighbourhood of the fibres in order to extend the 2-form. For any $q \in X$ we can find a tubular neighbourhood $V_q$ of the fibre $F_q$ and neighbourhoods $U_{2} \subset U_{1} \subset U$ of the fold singularity set $\Sigma$.   A $q\in X$ can be engulfed by an $m$-disk $D^{m}$. Around a fibre $F_{q}$, take $f^{-1}(D^{m}) = V_{q}$.  %As in \cite{ADK}, 
After removing a small neigbourhood of the critical set, we have that $V_q\setminus (V_q \cap U_{2})$ is diffeomorphic to $D^{m-1} \times (F_{q} \setminus (F_{q} \cap U_{2}))$. This follows from the Ehresmann theorem, since we have a nice smooth map locally without critical points. 

To extend the 2-form on the neighbourhood the fibre, we build a smooth map 
$$\pi\colon V_q \rightarrow V_q$$
by interpolating between two maps:
\begin{itemize}
\item[(i)] Close to the singular point of the fibre inside the neighbourhood $V_q \cap U_1$, we use the identity map, so that $\pi$ is $\textnormal{id}_{V_q \cap U_1}$.  Since  $V_q$ is a neighbourhood of a fibre $F_q$ and  $V_q \cap U_{1}$ retracts to $F_{q} \cap \Sigma$, we  want that $\pi$ maps down to the piece of the fibre close to the singularity together with the intersection of the neighbourhoods $V_q$ and $U_1$.  That is, 
$$\mathtt{Im}(\pi) \subset F_q \cup (V_q \cap U_{1}) $$

\item[(ii)] Farther away from the singular region, that is on the smooth part $F_q \setminus (F_q \cap U_2)$, we use the projection map $\mathtt{pr}\colon V_{q} \setminus (V_{q} \cap U_{2}) \rightarrow F_q \setminus  F_q \cap U_{2}$ that comes from the product structure. 
\end{itemize}
\noindent We use the map $\pi$ to construct a near-symplectic form $\beta$ on $V_q$.  With $\pi$, we pull back the 2-form $\eta$ on $V_q \cap U_1$ and the 2-form $\sigma_q$ on $F_q$ 
$$\beta = \pi^{*}\sigma_{q} \, + \,  \pi^{*}\eta $$
This 2-form has the following features:
\begin{enumerate}
 \item $d\beta=0$ and $[\beta] = \alpha|_{V_{q}}$
 \item $\beta|_{V_{q}\cap U_{2}} = \eta$
 \item there exists a 1-form $\mu_{q}$ on $V_{q}$, such that $\beta - \eta = d\mu_{q}$, since $[\beta - \eta] = 0 $ in $H^{2}(V_{q}, V_{q}\cap U_{2}) \simeq H^{2}(F_{q}, F_{q}\cap U_{2})$.  Thus, on $V_q$
$$\beta_{q} = \eta + d\mu_{q}$$
 \item $\beta_{q}|_{F_{q}} >0$ restricts positively to the fibre for every regular point $q\in V_{q}$.
\end{enumerate}

%
%%%%%%%%%%%%%%%%%%%%%%%%%%%%%%%%%%%%%%%%%%%
%%%%%%%%%%%%%%%%%%%%%%%%%%%%%%%%%%%%%%%%%%%
%
%                                      STEP 3
%
%%%%%%%%%%%%%%%%%%%%%%%%%%%%%%%%%%%%%%%%%%%
%%%%%%%%%%%%%%%%%%%%%%%%%%%%%%%%%%%%%%%%%%%
%
\noindent {\bf{Step 3: Patching into a global form}}
\\
We expand the near-symplectic form over the whole manifold $M$.  Since our base is compact, we can find a finite subset $Q\subset X$ and choose a finite cover $\mathcal{D}$ with open subsets $\left( D_{q} \right)_{q\in Q}$, such that $f^{-1}(D_{q}) \subset V_{q}$ for each $q\in X$.  Consider a smooth partition of unity $\rho\colon X \rightarrow [0,1]$, $\sum_{q\in Q} \rho_{q} =1$, subordinate to the cover $\mathcal{D}$ with $\mathtt{supp}(\rho_{q}) \subset D_{q}$. We build a global 2-form $\Omega$ on $M$ by patching the local 1-forms $\mu_{q}$ previously defined on $V_{q}$. Thus, we define the following closed 2-form
\begin{equation}
\Omega = \eta + d\left( \sum_{q\in Q} (\rho_{q}\circ f) \mu_{q}  \right)
\end{equation}
Since $f$ is constant on the fibres, the 1-form $d((\rho_{q} \circ f) \mu_{q})=0$ when evaluated on the vectors tangent to the fibre.   From the second step, $\eta$ agrees with $\omega_{A}$ when restricted to $U$.  Let $\bar{U}_{2}$ be the intersection of all neighbourhoods $U_{2}$ for all $q\in Q$, that is, $\bar{U}_{2} = U_{2} \bigcap_{q\in Q} f^{-1}(D_{q})$. The 2-form $\Omega$ agrees with $\eta$ when restricted to $\bar{U}_{2}$, so it agrees with the local model of $\omega_{A}$ at $U_{2}$. Thus, $\Omega$ is globally well-defined over $M$.

\noindent The 2-form $\Omega$ restricts to a fibre $F_{q}$ in the following way
\\
\noindent $\Omega|_{F_{q}} =  \eta|_{F_{q}} + \sum_{q\in Q} \rho\circ f(p)  d\mu_{q}|_{F_{q}}
= \sum_{q\in Q} \rho\circ f(p) (\eta + d\mu_{q})|_{F_{q}}$
\\
\indent $\quad = \sum_{q\in Q} (\rho\circ f(p)) \, \beta_q|_{F_{q}} $

This is a convex combination of near-symplectic 2-forms. On each fibre, $\Omega$ is closed, positive outside the singular locus, and degenerates at $\Sigma$, inducing a symplectic structure on each fibre outside the singularities.
%
%%%%%%%%%%%%%%%%%%%%%%%%%%%%%%%%%%%%%%%%%%%
%%%%%%%%%%%%%%%%%%%%%%%%%%%%%%%%%%%%%%%%%%%
%
%                                      STEP 4
%
%%%%%%%%%%%%%%%%%%%%%%%%%%%%%%%%%%%%%%%%%%%
%%%%%%%%%%%%%%%%%%%%%%%%%%%%%%%%%%%%%%%%%%%
%
\\
\\
\noindent {\bf{Step 4: Positivity on vertical and horizontal tangent subspaces}}
\\
\noindent  \noindent  To conclude the global construction, we can apply a similar argument as in the symplectic case. % as done by Thurston.  
The 2-form $\Omega$ is positive on the vertical tangent subspaces to the fibre $\ker df(p) = T_{p} F \subset T_{p} M $,  outside the singularity set. To guarantee positivity on the horizontal spaces, we multiply the pullback from the symplectic form of the base by a sufficiently large real number $K>0$ to obtain the 2-form 
\begin{equation}
\omega_{K} = \Omega + K\cdot f^{*}\omega_{X}
\end{equation}
If we restrict $\omega_{K}$ to the vertical tangent subspaces to the fibre, it agrees with $\Omega$.  The 2-form $\omega_K$ defines a near-symplectic structure on $M$.  \qed
% Hence, we have obtained a closed 2-form, positive on the fibres, non-degenerate outside $\Sigma$, and degenerating to rank $2n-4$ along the $(2n-3)$-dimensional singularity set $\Sigma$. \qed
%
%%%%%%%%%%%%%%%%%%%%%%%%%%%%%%%%%%
%%
%%    END OF PROOF
%%
%%%%%%%%%%%%%%%%%%%%%%%%%%%%%%%%%%

\subsection{Examples}\label{example fibration}
\subsubsection{Pullback bundle}
We can obtain examples of near-symplectic manifolds and near-symplectic fibrations via a pullback bundle construction. Let $n>2$, $M$ and $X$ oriented, closed, $(2n-2)$-manifolds, $B$ an oriented, closed, connected manifold of dimension $(2n-4)$, $f$ and $g$ smooth mappings, and $W = \lbrace (x,m) \in X\times M \mid f(x) = g(m) \rbrace$.
\smallskip
\begin{align}
\xymatrix{
W \ar[d]_{\tilde{g}}\ar[r]^{\tilde{f}}        &  M \ar[d]^{g} \\
X \ar[r]_{f}                                &    B
}
\nonumber
\end{align}
Before going to the near-symplectic case, we briefly comment on the symplectic one.  A theorem from Thurston tells us, that if $g$ is a compact symplectic fibration over a closed connected symplectic manifold $B$, and there is a class $\alpha \in H^2(M)$ such that $\iota^{*} \alpha = \left[ \sigma_b \right] \forall b\in B$, where $\sigma_b \in \Omega^2 (F_b)$ is the canonical form of the fibre, then $M$ is symplectic.  We can pullback this information to $W$ via $\tilde{f}$, and obtain a class $\tilde{\alpha} = \tilde{f}^{*} \alpha \in H^2 (W)$ with the same property. Thus, we only need $X$ to be symplectic in order that $W$ is a symplectic manifold via the induced map $\tilde{g}$. Now we discuss the near-symplectic scenario.  

Throughout these examples we assume that the critical set of $g$ form regular points for $f$, so that $f$ behaves like a bundle near the critical sets by the Ehresmann theorem (whenever there is a critical set for $g$).  The first example follows from theorem 1. If $g$ is a BLF (thus $\tilde{g}$ a generalized BLF), and $X$ is symplectic, then $W$ is near-symplectic via $\tilde{g}$ assumming that the cohomological condition of theorem 1 is satisfied. A second case appears when the base $X$ is near-symplectic. Keeping a vertical view of the diagram, now we do not consider $g$ and $\tilde{g}$ to be BLFs.  The following proposition explains this situation.
\begin{proposition}
Let $g\colon M \rightarrow B$ be a compact symplectic fibration with symplectic total space $M$ and  $(X, \omega_{X})$ a closed, near-symplectic manifold over a closed connected symplectic base $B$ of codimension 2.  Let $W$ be the pullback bundle as defined in the previous parragraph.  Then, $W$ carries a near-symplectic structure induced by $\tilde{g}\colon W \rightarrow X$. %with singular locus being a surface bundle over a closed $(2n-5)$-manifold.
\end{proposition}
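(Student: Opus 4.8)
The plan is a Thurston-type construction over the near-symplectic base $X$. By Thurston's theorem (recalled above), applied to the symplectic fibration $g\colon M\to S^2$ whose fibre class extends to $H^2(M)$, there is a closed $2$-form $\eta\in\Omega^2(M)$ restricting to a positive area form on every fibre of $g$. Since $\tilde f$ carries each fibre of $\tilde g$ diffeomorphically onto a fibre of $g$, the closed form $\eta_W:=\tilde f^*\eta$ restricts to an area form on every fibre of $\tilde g\colon W\to X$. I would then set $\omega_W:=\eta_W+K\,\tilde g^*\omega_X$ for a large constant $K>0$; this is closed, and $W$ is oriented as a bundle of oriented surfaces over the oriented manifold $X$ (equivalently by the Remark after Definition \ref{near-symplectic}, once near-symplecticity is established).

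Away from the singular set one argues as Thurston does. Because $\eta$ and $\omega_X$ live on $4$-manifolds, $\eta^3=0=\omega_X^3$, so
\[
\omega_W^3 \;=\; 3K\,\tilde f^*(\eta^2)\wedge\tilde g^*\omega_X \;+\; 3K^2\,\tilde f^*\eta\wedge\tilde g^*(\omega_X^2).
\]
On the complement of a fixed tubular neighbourhood $\mathcal V$ of $Z_\omega:=Z_{\omega_X}$, the $4$-form $\omega_X^2$ dominates a positive multiple of the volume form of $X$; since $\tilde g^*(\omega_X^2)$ annihilates the vertical bundle of $\tilde g$ while $\eta_W$ is positive on fibres, the second term is a positive multiple of the volume form of $W$ there, so for $K$ large it dominates the first. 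Hence $\omega_W^3>0$ on $W\setminus\tilde g^{-1}(Z_\omega)$, and $\tilde g^{-1}(Z_\omega)$ is a bundle with fibre $F$ (the fibre of $g$) over $Z_\omega\cong\bigsqcup S^1$ — the asserted surface bundle over $S^1$.

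The heart of the matter is the behaviour along $\tilde g^{-1}(Z_\omega)$, where Definition \ref{near-symplectic} demands that $\ker\omega_W$ be $4$-dimensional and $\operatorname{Im}(D_K)$ be $3$-dimensional. Here I would use that $\pi_1(S^2)=1$: the bundle $g$ is trivial over any disc in $S^2$, and since each component of $Z_\omega$ is a circle mapping null-homotopically into $S^2$, the pulled-back bundle $\tilde g$ is symplectically trivial over each component of $\mathcal V$; moreover the flux of $\eta_W$ around the circles of $Z_\omega$ vanishes (it equals $\int d\eta=0$ over a suitable disc-chain in $M$). Thus $\eta_W$ is cohomologous on $W|_{\mathcal V}$ to the pullback $\pi_F^*\sigma_F$ of a fibre area form by the projection $W|_{\mathcal V}\cong\mathcal V\times F\to F$, and may be homotoped to it through closed, fibrewise-non-degenerate forms supported near $\tilde g^{-1}(Z_\omega)$ (convex interpolation on each fibre). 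After this adjustment, choosing coordinates $(u_1,u_2,u_3,u_4)$ on $X$ that put $\omega_X$ into the standard $4$-dimensional near-symplectic local model along $Z_\omega=\{u_1=u_2=u_3=0\}$, and Darboux coordinates $(p_1,p_2)$ on $F$, one has
\[
\omega_W \;=\; dp_1\wedge dp_2 \;+\; K\,\omega_X(u_1,u_2,u_3,u_4),
\]
the $4$-dimensional near-symplectic model plus a constant symplectic $\R^2$. The verification now reduces to the known $4$-dimensional one: along $Z_\omega$ the kernel of $\omega_W$ is $\operatorname{span}(\partial_{u_1},\dots,\partial_{u_4})$, $\omega_W^3$ is a positive constant times $(u_1^2+u_2^2+u_3^2)$ times the volume form, and $D_K$ maps $\partial_{u_1},\partial_{u_2},\partial_{u_3}$ to the three linearly independent $2$-forms of the model and $\partial_{u_4}$ to $0$, so $\operatorname{Im}(D_K)$ has dimension $3$. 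Hence $\omega_W$ is near-symplectic with singular locus $\tilde g^{-1}(Z_\omega)$, a surface bundle over $\bigsqcup S^1$ (with trivial monodromy, the relevant loops in $S^2$ being contractible).

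I expect the only genuine obstacle to be this local step — putting $\eta$ into the flat form $\pi_F^*\sigma_F$ near the singular fibres while preserving fibrewise non-degeneracy, and checking that the surface directions contribute nothing to the intrinsic gradient, so that $\omega_W$ really splits as the $4$-dimensional model times a trivial factor. Closedness, the Thurston domination estimate, and the identification of the singular locus are routine.
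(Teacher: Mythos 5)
Your proposal is correct, and at its core it is the same Thurston-type construction as the paper's: a closed fibrewise area form plus a large multiple of $\tilde g^{*}\omega_X$, with the domination estimate for large $K$ giving $\omega_W^3>0$ away from $\tilde g^{-1}(\Gamma)$. The two arguments diverge in the supporting steps. You obtain the fibrewise form simply as $\tilde f^{*}\eta$ for a Thurston form $\eta$ on $M$ (legitimate, since $\tilde f$ is a fibrewise diffeomorphism), whereas the paper re-runs the Thurston patching on $W$ itself, choosing a representative $\tilde\eta$ of $\tilde f^{*}\alpha$, local primitives $\mu_k$ over a trivializing cover of $S^2$, and a partition of unity kept away from the singular region. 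More substantively, the degeneracy along the singular locus is arranged differently: the paper selects $\tilde\eta$ so that $\tilde\eta^{2}$ vanishes on a neighbourhood $\bar E$ of $\tilde g^{-1}(\Gamma)$, on the grounds that $\bar E$ is cohomologically $3$-dimensional, and then only records that $\omega_K^2|_Z=0$ together with a discussion of the rank-$2$ form on the mapping torus; you instead trivialize the bundle over a tubular neighbourhood of each singular circle (using $\pi_1(S^2)=1$ and the vanishing of the flux) and homotope $\tilde f^{*}\eta$ there to the pullback of a fixed fibre area form, so that $\omega_W$ becomes literally the $4$-dimensional near-symplectic local model times a constant symplectic $\R^2$. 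Your route costs the trivialization and flux discussion, but it buys an explicit verification of the conditions of Definition \ref{near-symplectic} — that $\ker\omega_W$ is $4$-dimensional and $\textnormal{Im}(D_K)$ is $3$-dimensional along the singular surface bundle — by direct reduction to the known $4$-dimensional case, a point the paper's proof leaves largely implicit.
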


\begin{proof}
Let $\Gamma$ be the singular locus of $\omega_{X}$, that is a codimension 3 submanifold in $X$.  Its preimage under $\tilde{g}$ is a surface bundle %over $\Gamma$
, and we will denote by $Z$ its total space. This bundle will become the singular locus of the near-symplectic form of $W$.  Let $\mathcal{U}$ be the tubular neighbourhood of $\Gamma$ and let $E = \tilde{g}^{-1}(\mathcal{U})$. 
%?
$E$ is a surface bundle. We will also consider a small tubular neighbourhood $\bar{E}$ inside $E$.

Now we construct a closed 2-form $\tilde{\eta} \in \Omega^2(W)$ such that it is positive on the fibres of $\tilde{g}$ in $W$ and whose wedge power $\tilde{\eta}^{n-1}$ is zero on $E$. Since $g$ is a symplectic fibration, we have a cohomology class $\alpha \in H^2(M)$ that pairs pairs positively with the fibre class. We choose $\tilde{\eta}$ such that $[\tilde{\eta}] = \tilde{f}^{*} \alpha \in H^2 (W)$ with $\iota^{*}\tilde{\alpha} = \tilde{f}^{*} \left[ \sigma \right]$. Secondly, as $\bar{E}$ and $E$ are cohomologically $(2n-3)$-dimensional, we can select $\tilde{\eta}$ with the property that $\tilde{\eta}^{n-1}|_{\bar{E}} = 0$.

Let $U_k$ be contractible open subsets of a cover of $B$ with trivializations $\phi_k$, such that $\phi_k \circ \phi^{-1}_j$ are symplectomorphisms over $U_k \cap U_j$. We bring these neighbourhoods to $W$ as $(\tilde{g} \circ f)^{-1}(U_k) = \tilde{U}_k$. Define $\psi_k: = (\textnormal{proj}\circ \tilde{\phi}_k \circ \tilde{f}) \colon \tilde{U}_k \rightarrow F$.  Over $\tilde{U}_k$ there is a 1-form $\mu_k$ such that $d\mu_k = \psi^{*} \tilde{\sigma}_k - \tilde{\eta}_k$, since $\left[ \tilde{\eta} \right] = \tilde{f}^{*}|_{F}\left( \alpha \right) = \left[ \psi^{*} \tilde{\sigma} \right]$.

The rest of the proof follows similarly as in step 3 and 4 of theorem's 1 proof. Choose a partition of unity $\rho \colon W\rightarrow [0,1]$ in such a way that its open subsets do not touch $\bar{E}$, and with it define a closed 2-form $\beta = \tilde{\eta} + \sum_k \rho_{k} d \mu_k$ on $W$.  This form has the properties that: $\beta|_{\bar{E}} = \tilde{\eta}|_{\bar{E}}$ and $\beta|_{F} = \sigma_b$, where $\sigma_b$ is the form of the fibre $F_b$.  Finally, we build up our global form by adding $\tilde{g}^{*}\omega_{X}$.  If $K$ is a sufficiently large positive real number, then we have a closed 2-form, which is non-degenerate away from $Z$
$$\omega_{K} = \beta + K \cdot \tilde{g}^{*} \omega_X$$
\end{proof}
\subsubsection{Near-symplectic manifolds coming from BLFs}
Broken Lefschetz fibrations also provide ways to obtain near-symplectic fibrations on $2n$-manifolds over near-symplectic $(2n-2)$-manifolds. Let $g \colon M\rightarrow B$ be a BLF as defined previously with singular fold set $\Sigma_{\tilde{g}}$, where $M$ is near-symplectic of $\dim(M) \geq 4$ and $B$ is a closed, connected, symplectic manifold of $\dim(B) \geq 2$. Furthermore, consider $(X, \omega_X)$ to be a symplectic manifold of $\dim(X) \geq 4$. Assume that there is a class $\alpha \in H^{2}(M)$ such that $\langle \alpha, F \rangle >0$ and $\tilde{\alpha}|_{\Sigma_{\tilde{g}}} = \omega_X|_{\Sigma_{\tilde{g}}}$. Then, $W$ is near-symplectic via a generalized BLF $\tilde{g}$. %, if $\tilde{\alpha}|_{\Sigma_{\tilde{g}}} = 0 \in H^2 (W)$, when restricted to the fold singular set $\Sigma$ of $\tilde{g}$.

If both $f\colon X\rightarrow B$ and $g \colon M\rightarrow B$ are two BLFs, then we require the intersection of their critical images to be transversal in $B$, but not necessarily disjoint. In that case, it follows from a standard differential topological argument that $W$ is a $2n$-dimensional manifold. The maps $\tilde{f}$ and $\tilde{g}$ become near-symplectic fibrations, carrying the same type and number of fold and Lefschetz-type singularities as $f$ and $g$ respectively. Around a critical point in $f^{*}M$, the maps $\tilde{f}$ and $\tilde{g}$ are locally modelled by coordinate charts $\varphi$ and $\pi$ respectively defined as 
\begin{align}
\varphi \colon \R^{2n} &\rightarrow \R^{2n-2} \quad &, \quad  (r_{1}, \dots , r_{2n}) &\mapsto (r_1^2 + r_2^2 - r_3^2, r_4, \dots , r_{2n}) 
\nonumber
\\
\pi \colon \R^{2n} &\rightarrow \R^{2n-2} \quad &, \quad (r_{1}, \dots , r_{2n})& \mapsto (r_1, \dots , r_{2n-3}, -r_{2n-2}^2 + r_{2n-1}^2 + r_{2n}^2)  
\nonumber
\end{align}
Assume the cohomological condition on the class $\tilde{\alpha} \in H^{2}(W)$ as above, and denote by $\Gamma$ the singular locus of $\omega_{X}$, and $\Sigma$ the singularity set of $\tilde{g}$. The mapping $\tilde{g}$ becomes a a near-symplectic fibration over a near-symplectic base $(X, \omega_{X})$, if $\tilde{g}^{-1}(\Gamma) \not\subset \Sigma$ in $W$.  This construction gives 2 generalized BLFs, one for each pullback mapping.

\begin{rem}
Even though lemma 3.5 together with step 1 tell us that the fold map pulls back symplectomorphically, other types of singularities might need a different treatment.  For instance, if we would like to consider deformations of near-symplectic fibrations, in a similar fashion as Lekili \cite{Lekili}, then it would be necessary to consider all stable singularities of maps from $\R^{2n}$ to $\R^{2n-2}$. For maps going from a 6 dimensional source to a 4 dimensional target, there are 4 stable singularities: folds, cusps, swallowtails, and butterflies \cite{ArnoldSing, Guillemin}. For higher dimensions the list becomes longer and more complicated. 
\end{rem}

\section{Geometry of the Singular Locus}
In this section we study the geometry around the singular locus induced by the near-symplectic form. First, we show that the singular locus carries a natural Hamiltonian structure. Then we show that if $Z$ admits a stable Hamiltonian structure, so does its normal sphere bundle $Z\times S^2$, in the case where the normal bundle is trivial. In the second part, we describe the splitting property of the  the normal bundle following from a near-symplectic structure, similar to the 4-dimensional case.  Then, we give a neighbourhood-type theorem. As a corollary, we find a local Darboux-type statement for near-symplectic forms. . 
\subsection{Stable Hamiltonian Structures}
We present the next definitions as exposed by Cielebak and Volkov \cite{Cieliebak}.                                               
\begin{definition}
A {\it Hamiltonian structure} (HS) on an oriented $(2n-1)$-dimensional manifold $M$ is a closed 2-form $\Omega$ such that $\Omega^{n-1}\not= 0 $ everywhere.  Associated to $\Omega$ is its 1-dimensional {\it kernel distribution}  $\ker(\Omega):= \lbrace v\in TM \mid \iota_v \Omega = 0 \rbrace$.  We orient $\ker(\Omega)$ using the orientation on $M$ together with the orientation on the local transversal to $\ker(\Omega)$ given by $\Omega^{n-1}$.  

A {\it stabilizing 1-form} for $\Omega$ is a 1-form $\lambda$ such that 
\begin{enumerate}
  \item $\lambda \wedge \Omega^{n-1}>0$, and

  \item $\ker(\Omega) \subset \ker(d\lambda)$
\end{enumerate}
A Hamiltonian structure $\Omega$ is called {\it stabilizable} if it admits a stabilizing 1-form $\lambda$.  A {\it stable Hamiltonian structure (SHS)} is the pair $(\Omega, \lambda)$.
\end{definition}

A SHS $(\Omega, \lambda)$ induces a canonical {\it Reeb vector field} $R$ generating $\ker(\Omega)$ and normalized by $\lambda(R)=1$.  Not that if $(\Omega, \lambda)$ is a SHS, then $(\Omega, -\lambda)$ is a SHS inducing the opposite orientation.

\begin{example}
\quad
\begin{enumerate}
 \item {\it Contact manifolds}: $(M, \lambda)$ is a contact manifold, $R$ is the Reeb vector field, and $\Omega =\pm d\lambda$.
%\\
%\\
\item {\it Mapping tori}: $M:= W_{\phi} = \R \times W \slash (t,x) \sim (t+1, \phi(x))$ is the mapping torus of a symplectomorphism $\phi$ of a symplectic manifold $(W, \bar{\omega}), R = \frac{\partial}{\partial t}, \lambda = dt$, and $\Omega$ is the form on $M$ induced by $\bar{\omega}$.  Note that $d\lambda= 0$, so $\ker(\lambda)$ defines a foliation.  Notice that $W_{\phi} = [0,1] \times W \slash (0,x) \simeq (1, \phi(x))$
%\\
%\\
\item {\it Circle bundles}: $\pi\colon M \rightarrow W$ is a principal circle bundle over a symplectic manifold $(W, \bar{\omega})$, $R$ is the vector field generating the circle action, $\lambda$ is the connection form, and $\Omega = \pi^{*}\bar{\omega}$.  %Notice that in case of $\omega = d\lambda$  being the curvature of the connection $\lambda$, then we are actually in the contact case (Boothby-Wang construction).
\end{enumerate}
\end{example}

The next example follows directly from the definition of a near-symplectic form.
\begin{proposition}
A near-symplectic structure induces a Hamiltonian structure on its singular locus $Z_{\omega}$.
\end{proposition}

\begin{proposition}
Let  $(Z\times \R^3, \omega)$ be a near-symplectic manifold with singular locus $Z \times \{0\}$, where $Z$ is an oriented $(2n-1)$-manifold.  If $\varepsilon$ is a stabilizing 1-form for $\omega_Z$ on $Z$, then the normal sphere bundle $Z\times S^2$ has a stable Hamiltonian structure.
\end{proposition}

\begin{proof}
By assumption, we have that $\varepsilon \wedge \omega_Z^{n-2} >0$ on $Z$ and $\ker(\omega_Z) \subset \ker(d\varepsilon)$.  Let $\sigma_{S^2}$ be the symplectic form of $S^2$. 
The boundary of a piece of the tubular neighbourhood $\partial (Z\times B^3) = Z\times S^2$ can be equipped with a Hamiltonian structure by 
\begin{equation}
\bar{\omega} = \omega_Z + \sigma_{S^2}
\end{equation}
This is a closed 2-form of maximal rank on $Z\times S^2$, since $\bar{\omega}^{n-1} = \omega_Z^{n-2} \wedge \sigma_{S^2} >0$.  The stabilizing 1-form on $Z\times S^2$ is defined by $\lambda = \varepsilon$. 
We have
$$\lambda \wedge \bar{\omega}^{n-1} = \varepsilon \wedge (\bar{\omega}^{n-2} \wedge \sigma_{S^2})>0 $$
%\begin{align}
%\lambda \wedge \omega^{n-1} &= \varepsilon \wedge (\omega^{n-2} \wedge \sigma_{S^2}) + \underbrace{\beta \wedge (\omega^{n-2} \wedge \sigma_{S^2})}_{=0, \textnormal{ since } \beta\wedge \sigma_{S^2} = 0 } 
%\nonumber
%\\
%&=\varepsilon \wedge \omega^{n-2}_{Z} \wedge \sigma_{S^2} 
%\nonumber
%\\
%&>0
%\nonumber
%\end{align}
%$$ \lambda \wedge \omega^{n-1} = \varepsilon \wedge (\omega^{n-2} \wedge \sigma_{S^2}) + \underbrace{\beta \wedge (\omega^{n-2} \wedge \sigma_{S^2})}_{=0, \textnormal{ since } \beta\wedge \sigma_{S^2} = 0 } = \varepsilon \wedge \omega^{n-2}_{Z} \wedge \sigma_{S^2} > 0$$
This shows the first condition of a SHS.  Now, for the second property observe that 
$$ \ker(\bar{\omega}) =\lbrace v\in TM \mid \iota_v \bar{\omega} =  \iota_v (\omega_Z + \sigma_{S^2}) = 0 \rbrace \simeq \mathcal{E} = \ker(\omega_Z) $$
%
%and  
%
%$$ \ker(d\lambda)=\lbrace v\in TM \mid \iota_v d\lambda = \iota_v (d\varepsilon + d\beta) = 0 \rbrace = \ker(d\varepsilon) $$
%
In this case $\ker(\bar{\omega}) \subset \ker(d\lambda)$. The pair $(\bar{\omega}, \lambda)$ is a stable Hamiltonian structure for $Z\times S^2 \subset (M, \omega_{\textnormal{ns}})$.
\end{proof}

\subsection*{Stable Hamiltonian in BLF case}
\begin{proposition}
Let $(Z, \xi_Z = \ker(\alpha_Z))$ be a contact manifold of dimension $2n-1$, and $(Z\times \R, \omega_B = d(e^t \alpha_Z))$ its symplectization.  Let $f\colon Z\times \R^3 \rightarrow Z\times \R$ be a broken Lefschetz fibration. The total space $Z\times \R^3$ is near-symplectic inducing a stable Hamiltonian structure on $Z\times S^2$. 
\end{proposition}
\begin{proof}
We now equip $M = Z\times \R^3$ with a near-symplectic form along the lines of \cite{ADK} and theorem 1. Over the regular neighbourhood of $Z$, using the coordinates $(x_i)$ of the fibre,   define the 2-form  
\begin{equation}\label{tau}
  \tau = d (x_1 (x_2 dx_3 - x_3 dx_2))
\end{equation}	
We obtain a closed 2-form, positive on the fibres and non-degenerate outside $Z$. Define the 2-form $\omega \in \Omega^2(Z\times \R^3)$ as 
$$\omega = \tau + f^{*} \omega_B$$
At the points where $\omega^n = 0$ we have a 4-dimensional kernel $K = \lbrace v\in T_p M \mid  \omega_p (v,\cdot) = 0\rbrace \simeq \varepsilon \oplus TY^3$, where $\varepsilon = \ker(f^{*}\omega_B)$. %The image of the intrinsic gradient $\nabla \omega_p|_{K}$ is of dimension 3. 
The 2-form $\omega$ defines a near-symplectic structure on $Z\times \R^3$.

Let $\mathcal{U}$ be the tubular neighbourhood of $Z$ in $M$ and $\sigma_{S^2}$ the area form of $S^2$. Define on the boundary of $\mathcal{U}$ the 2-form
\begin{equation}
\bar{\omega} = d\alpha_Z + \sigma_{S^2}
\end{equation}
The contact form $\alpha_Z$ will work as the stabilizing 1-form $\lambda = \alpha_Z$. 
A simple computation shows that 
$$\lambda \wedge \bar{\omega}^{n-1} = \alpha_Z \wedge d\alpha_{Z}^{n-2} \wedge \sigma_{S^2} > 0$$
Moreover, since $\ker(\bar{\omega}) \simeq \varepsilon \simeq \ker(d\alpha_Z)$, the second property is also satisfied.
Hence, the pair $(\bar{\omega}, \alpha_Z)$ defines a stable Hamiltonian structure on the boundary of the singular locus $Z\times S^2$. 
\end{proof}

\subsection{Normal bundle of $Z$}
In this section, we will first show that the definition of near-symplectic form reflects properties on the normal bundle of the singular locus similar to dimension 4. In particular, we obtain a splitting of the normal bundle $N_Z$ into two subbundles. 
%
%$L_{+}, L_{-}$, where $L_{+}$ is a rank 2-bundle corresponding to the positive eigenspace of an endomorphism $N_Z \rightarrow N_Z$, and $L_{-}$ is a rank 1 bundle corresponding to the negative one.
%
%
% can obtain an endomorphsim $N_{Z} \rightarrow N_{Z}$ of the normal bundle of $Z$ with three eigenvalues, two positive and one negative. 
%
%
Let $K:= \epsilon \oplus N_Z$ be defined by $\epsilon = \ker(\omega_Z)$ and the normal bundle of $Z$, the singular locus of $\omega$. Fix a metric $g$ on $K$ such that $\omega|_K$ is self-dual. 
%
% harmonic and $\omega|_{K} = \ast_g \omega|_K$, where $\ast \colon \Omega^2(\epsilon \oplus N_Z) \rightarrow \Omega^2(\epsilon \oplus N_Z)$ is the Hodge-star operator defined with respect to $g$. 
%
Identify the intrinsic normal bundle $N_{Z}$ with the complement $(TZ)^{\perp}$ using the metric $g$. From the transversality of $\omega$, the image of the intrisic gradient $D_K := \nabla \omega|_{K}$ is 3-dimensional. In fact, we have that $\textnormal{Im}(D_k) = \Lambda^{2}_{+} K^{*}$. Thus we have a natural identification with the bundle of self-dual 2-forms. 
%
%This follows from the non-negativity property of the near-symplectic form as shown in chapter 3.  
%
This implies that $D_K$ defines an isomorphism
$$N_{Z} \rightarrow \Lambda^{2}_{+} K^{*}$$
Let $X = \frac{\partial}{\partial z_0}$ be unit vector field defined on the line $\ker(\omega|_{Z}) \subset TZ$. The interior derivative defines a bundle isomorphism  
\begin{align}
\Lambda^{2}_{+} K^{*} &\rightarrow N_{Z}^{*} 
\nonumber
\\
\beta &\mapsto \iota_X \beta 
\nonumber
\end{align}
Its inverse $N_{Z}^{*} \rightarrow \Lambda_{+}$ is given by $\nu \mapsto \zeta \wedge \nu + \ast(\zeta \wedge \nu)$, where $\zeta$ is a 1-form that is non-vanishing on $\epsilon$. Using the metric we can define an isomorphism $N_{Z}^{*}\rightarrow N_{Z}$. The endomorphism 
$$F\colon N_{Z} \rightarrow N_{Z}$$
defined by the composition 
$$N_{Z} \xrightarrow{D_K}  \Lambda_{+} \xrightarrow {\iota_X} N_{Z}^{*} \xrightarrow{\, g \,} N_{Z}$$
is a self-adjoint, trace-free automorphism as in dimension 4 \cite{Perutz, Honda3}. The matrix $A$ representing this map is symmetric and trace-free. Consequently, at each point $p\in Z$, $A$ has three real eigenvalues, two of them positive and one negative, following the sign convention used in low dimensions \cite{Perutz, Honda3, Taubes}. %
%(Alternatively, we can also consider the quadratic form $N_{Z} \rightarrow \R, v \mapsto \langle \iota_X \nabla_v \omega, v \rangle$. As $d\omega = 0$, this quadratic form is symmetric and has trace zero.) 
We obtain a splitting of the normal bundle in 2 eigensubbundles defined by the negative and positive eigenspaces 
$$N_{Z} \simeq L_{-} \oplus L_{+} $$
Here $L_{-}$ is a rank 1 bundle, locally trivial, and $L_{+}$ is a rank 2 bundle orthogonal complementary to $L_{-}$.  After a choice of basis the linear map $F$ can be represented by a trace-free symmetric matrix $A = A_{+} \oplus A_{-}$, where $A_{+}$ is a $2\times 2$ positive-definite matrix, and $A_{-} < 0$.
\subsection{Proof of Theorem 2}\label{neighbourhood thm}
%%
%%%%%%%%%%%%%%%%%%%%%%%%%%%%%%%%%%%%%%%%%%%%%%
%%
%%    STEP 1:  FAMILY OF NEAR-SYMPLECTIC FORMS 
%%
%%%%%%%%%%%%%%%%%%%%%%%%%%%%%%%%%%%%%%%%%%%%%%
%%
\noindent {\bf Step 1: Family of near-symplectic forms}
\\
Define $\omega_t = (1-t)\omega_0 + t \cdot \omega_1$.  We want to show that this is a family of near-symplectic forms. The closedness property follows from the fact that this family is a linear combination of closed 2-forms.  The symplectic subspaces defined by $\omega_{Z_{0}}$ and $\omega_{Z_{1}}$ are the same on $TZ_0 \simeq \textnormal{Symp}_{0}\oplus \varepsilon_0$ and $TZ_1 \simeq \textnormal{Symp}_{1}\oplus \varepsilon_1$. This defines the same complementary line bundle $\varepsilon = \ker(\omega_{Z_{0}}) = \ker(\omega_{Z_{1}})$. %Since the total spaces $NZ_0$ and $NZ_1$ are oriented we can give a canonical orientation to the positive subbundles $L^{+}_{0}, L^{+}_{1}$, which in turn gives an orientation to the line bundles $L^{-}_{0}, L^{-}_{1}$. 
\\

\noindent The kernels $K_{0} \simeq \varepsilon \oplus N_{Z_{0}} $ and $K_{1} \simeq \varepsilon \oplus N_{Z_{0}}$ are 4-dimensional. %By choosing an orientation preserving isomorphism mapping $ L_{0}^{+}  \mapsto L_{1}^{+}$ and $L_{0}^{-}  \mapsto  L_{1}^{-}$, then we can construct an isomorphism such that $K_0 \simeq K_1$. 
Interpolating between $\omega_0$ and $\omega_1$ leaves $\dim(K_t) = 4  \quad \forall t$.  % Interpolating bewteen $\omega_0$ and $\omega_1$ leaves a 4-dimensional kernel. 
Thus, up to scaling the intrinsic gradients $D_{K_{0}}:= \nabla \omega|_{K_0}$ and $ D_{K_{1}}:= \nabla \omega|_{K_1}$ agree and so their images. 
Hence, it follows that at a point $p=0$ in $Z$ we have that $\omega_{t}^{n}=0$.  Notice that this property  can also be computed directly by looking at the expansion 
\begin{align}
\omega_t^n(0) &= c_n(t) \omega_{0}^{n} (0) +   c_{n-1} (t)  {n\choose 1} \cdot \omega_{0}^{n-1} \wedge \omega_1  (0) + c_{n-2}(t) {n\choose 2}  \omega_{0}^{n-2}  \wedge  \omega_{1}^{2}(0)   + \dots 
\nonumber
\\
&+ c_{0}(t) \omega_{1}^{n} (0)
\nonumber
\end{align}
\noindent where $c_k(t)= (1-t)^k\cdot t^{n-k}$, with $k\in \mathbb{Z}, k\in [0,n]$.   In the previous expression all terms vanish, since each of them necessarily takes four vectors from $K_t$.  

Now we show that $\omega_t^n$ is non-negative.  Let  $v$ a vector in $N_Z$ and $s\in \R$.  Consider the Taylor expansion around $p\in Z$. 
\begin{align}
&\omega_t^n(s\cdot v) 
\nonumber
\\
&= \underbrace{\omega^{n}_{0}(0)}_{=0} \quad         + s\cdot \underbrace{\omega_{0}^{n-1}}_{=0} \wedge \nabla_v \omega_0                                                                                                + s^2 \cdot \omega^{n-2}_{0} \wedge (\nabla_v \omega_0)^2 + \dots +  \underbrace{\omega_{0}^{k} \wedge \omega_{1}^{n-k}(0)}_{=0}
%\nonumber
%\\
%                                    &+\underbrace{\omega_{0}^{n-1}}_{=0} \wedge \omega_{1}(0) %+ s\cdot \omega_{0}^{n-2} \wedge \omega_{1}\wedge \nabla\omega_0 + s\cdot \omega_{0}^{n-1} \wedge \nabla \omega_{1} + s^2\omega_{0}^{n-3} \wedge (\nabla_v\omega_{0})^2
%\nonumber
%\\
%                                    &    \hspace{10mm}                             + s\cdot \underbrace{\omega_{0}^{n-2} \wedge \omega_{1}}_{=0}\wedge \nabla\omega_0 + s\cdot \underbrace{\omega_{0}^{n-1} \wedge \nabla \omega_{1}}_{=0} 
%\nonumber
%\\
%                                    &   \hspace{20mm}                                                                                                                                                                                                                                  + s^2\cdot \omega_{0}^{n-3} \wedge (\nabla_v\omega_{0})^2 + \dots 
%%%\nonumber
%%%\\
%                                   &\vdots
%\nonumber
%\\
%                                   &+ \underbrace{\omega_{0}^{k} \wedge \omega_{1}^{n-k}(0)}_{=0}
\nonumber
\\
                                    & + s\cdot \underbrace{\omega_{0}^{k-1} \wedge \omega_{1}^{n-k}}_{=0}\wedge \nabla\omega_0  +  s\cdot \underbrace{\omega_{0}^{k} \wedge \omega_{1}^{n-k-1}}_{=0}\wedge \nabla\omega_1 
\nonumber
\\
                                    &  + s^2\cdot (\omega_{0}^{k-2}\wedge\omega_{1}^{n-k} \wedge (\nabla_v\omega_{0})^2 +   \omega_{0}^{k-1}\wedge\omega_{1}^{n-k-1} \wedge (\nabla_v\omega_{0}\wedge \nabla_v\omega_{1}) 
\nonumber
\\
                                    & +   \omega_{0}^{k}\wedge\omega_{1}^{n-k-2} \wedge (\nabla_v\omega_{1})^2 )+ \dots 
\nonumber
\\
%                                   &\vdots
%\nonumber
%\\
&= \underbrace{\omega^{n}_{1}(0)}_{=0} \quad         + s\cdot \underbrace{\omega_{1}^{n-1}}_{=0} \wedge \nabla_v \omega_1                 + s^2 \cdot \omega^{n-2}_{1} \wedge (\nabla_v \omega_1)^2 + \dots
\nonumber
\end{align}
The terms of the form $\omega_{0}^{k} \wedge \omega_{1}^{n-k}$ for $k\in\lbrace 0, \dots , n\rbrace$  vanish identically as explained in the previous paragraph. The linear terms of the form $\omega_{0}^{k-1} \wedge \omega_{1}^{n-k+1} \wedge \nabla_v \omega_i$ for $i=0,1$ are also zero, since from the $2n-2$ vectors $v_i$ which are allocated in $\omega_{0}^{k-1} \wedge \omega_{1}^{n-k+1} (v_1, \dots, v_{2n-2})$,  at least 2 of those vectors should come from $K_t$. This leaves us with the following expression with leading terms of the order $s^2$
\begin{align}
&\omega_t^n(s\cdot v)
\nonumber
\\
&=s^2 \cdot \big{(}\omega_{0}^{n-2} (0) \wedge (\nabla_v \omega_0)^2   +  \dots 
\nonumber
\\
                                      &+ \omega_{0}^{n-2} \wedge \left(  \nabla_v \omega_0 \wedge \nabla_v \omega_1  \right) + \dots + \omega_{0}^{n-3}\wedge \omega_1 (\nabla_v \omega_{0})^2 + \dots 
\nonumber
\\
                                      &+ \omega_{0}^{n-2} \wedge \left(   \nabla_v \omega_1  \right)^{2} + \dots + \omega_{0}^{n-3} \wedge \omega_1 \left( \nabla_v \omega_0 \wedge \nabla_v \omega_1\right) + \dots + \omega_{0}^{n-4}\wedge \omega_{1}^{2}\wedge (\nabla_v \omega_0)^2 + \dots 
\nonumber
\\
                                    &+ \omega_{0}^{n-3} \wedge \omega_{1} \wedge (\nabla_v \omega_{0})^2 + \dots + \omega_{0}^{n-4}\wedge \omega_1^{2} \left( \nabla_v \omega_0 \wedge \nabla_v \omega_1\right )  + \dots + \omega_{0}^{n-5}\wedge \omega_{1}^{3}\wedge (\nabla_v \omega_0)^2 
\nonumber
\\
&+ \dots + \omega_{1}^{n-2} \wedge \left(  \nabla_v \omega_0 \wedge \nabla_v \omega_1  \right) + \dots + \omega_0 \wedge \omega_{1}^{n-3} (\nabla_v \omega_{0})^2 + \dots
\nonumber
\\
%\vdots
%\nonumber
%\\
%                                      &+ \omega_{1}^{n-2} \wedge \left(  \nabla_v \omega_0 \wedge \nabla_v \omega_1  \right) + \dots + \omega_0 \wedge \omega_{1}^{n-3} (\nabla_v \omega_{0})^2 + \dots 
%\nonumber
%\\
                                     &+ \omega_{1}^{n-2} (0) \wedge (\nabla_v \omega_1)^2   +  \dots \big{)}
\nonumber                                     
\end{align}
Factorizing the $(n-2)$-forms which are symplectic on $Z$,  we can rewrite the previous expression as 
\begin{align}
\omega_t^n(s\cdot v) &=s^2 \cdot (\omega_{0}^{n-2} (0) \wedge \left(  (\nabla_v \omega_0)^2  +    \nabla_v \omega_0 \wedge \nabla_v \omega_1     +  (\nabla_v \omega_1)^2   \right) + \dots 
\nonumber
\\
                                    &+  \omega_{0}^{n-k} \wedge \omega_{1}^{k} \wedge \left(  (\nabla_v \omega_0)^2  +    \nabla_v \omega_0 \wedge \nabla_v \omega_1     +  (\nabla_v \omega_1)^2   \right) + \dots 
\nonumber
\\
                                    &+  \omega_{1}^{n-2} (0) \wedge \left(  (\nabla_v \omega_0)^2  +    \nabla_v \omega_0 \wedge \nabla_v \omega_1     +  (\nabla_v \omega_1)^2   \right) )
\label{eq:Taylor2}
\end{align}
%
%As shown in chapter 3, 
As shown in section 2, by restricting the terms $(\nabla_v \omega_0)^2$ and $(\nabla_v \omega_1)^2$ to vectors $\partial_{k_{i}}$ in $K_t$ we have  $(\nabla_v \omega_0)^2 (\partial_{k_{1}}, \dots , \partial_{k_{4}}) = D_{K_{0}}^2 \geq 0$ and $(\nabla_v \omega_1)^2 (\partial_{k_{1}}, \dots , \partial_{k_{4}}) = D_{K_{1}}^2 \geq 0$.   Thus, in the equation \eqref{eq:Taylor2}, the square binomial terms are non-negative
$$(  (\nabla_v \omega_0)|_{K}^2  +    \nabla_v \omega_0 \wedge \nabla_v \omega_1 |_{K}    +  (\nabla_v \omega_1)|_{K}^2 )= (\nabla_v \omega_0|_{K} + \nabla_v \omega_1|_{K})^{2}  := \nabla_v \omega_{t}^{2}|_{K} \geq 0 $$
%
%$$\omega_t^n =s^2 \cdot ( \omega_{0}^{n-2} (0) \wedge \left( \nabla_v \omega \right)^{2} + \dots + \omega_{0}^{n-k} \wedge \omega_{1}^{k} \wedge \left( \nabla_v \omega|_{K} \right)^{2} + \dots  + \omega_{1}^{n-2} (0) \wedge \left(   \nabla_v \omega|_{K}   \right)^{2})$$
%
and the forms $\omega_{0}^{n-k} \wedge \omega_{1}^{k} $, for $k\in \lbrace 0, 1, \dots, n \rbrace$ are positive on the symplectic subspace in $Z$,   from which we conclude that $\omega_{t}^{n} \geq 0$ on the tubular neighbourhood of the singular locus. 
%
%%
%%%%%%%%%%%%%%%%%%%%%%%%%%%%%%%%%%%%%%%%%%%%%%
%%
%%    STEP 2: POINCARE LEMMA 
%%
%%%%%%%%%%%%%%%%%%%%%%%%%%%%%%%%%%%%%%%%%%%%%%
%%
\\
\\
\noindent {\bf Step 2: Poincar{\'e} Lemma}
\\
\noindent These next two steps follow the lines of Perutz \cite{Perutz}, where we first use an application of the Poincar{\'e} Lemma. 
% Recall the following general version of the Poincar{\'e} Lemma. Let $\pi\colon N_Z \rightarrow Z$ be the bundle projection and $i\colon Z\rightarrow N_Z$ the zero-section. Let $h_t\colon N_Z \rightarrow N_Z \, , \, x \rightarrow t\cdot x$ be the fibrewise dilation and $R$ the Euler vector field, that is the vertical vector field defined on any fibre $N_x$ by $R(y) = \sum y_j \frac{\partial}{\partial y_j}$, where $y_j$ are orthonormal Euclidean coordinates on $N_x$. Such a vector field always exist on any vector bundle.  
The De Rham homotopy operator
\begin{align}
Q\colon \Omega^k &\rightarrow \Omega^{k-1} 
\nonumber
\\
Q\Omega &= \int_{0}^{1} h^{*}_{t} (\iota_R \Omega) dt 
\nonumber
\end{align}
satisfies 
$$ \textnormal{Id}(\Omega)  - (\iota \circ \pi)^{*}(\Omega) = dQ(\Omega) + Qd(\Omega)$$
Here $\pi\colon N_Z \rightarrow Z$ is the bundle projection, $i\colon Z\rightarrow N_Z$ ithe zero-section,  $h_t\colon N_Z \rightarrow N_Z \, , \, x \rightarrow t\cdot x$ the fibrewise dilation, and $R$ the Euler vector field.
%If $\rho$ vanishes along $i(Z)$ then $Q(\rho)$ vanishes to second order along $i(Z)$.  
Applying this lemma to a neighbourhood of the zero section $\mathcal{U}_0\subset N_Z$ we find a 1-form $\lambda_t:= Q(\omega_t)$ satisfying $d\lambda_t = \omega_t$  on $\mathcal{U}_0 \setminus Z$. Moreover, notice that $\omega_t$ vanishes up to degree 1 on $K_t$. Inserting the Euler vector field $R$ into $\omega_t$ adds one degree more and produces a 1-form $\iota_R \omega_t$ that vanishes on $Z$ up to degree 2. 
%
%%
%%%%%%%%%%%%%%%%%%%%%%%%%%%%%%%%%%%%%%%%%%%%%%
%%
%%    STEP 3:  MOSER TRICK 
%%
%%%%%%%%%%%%%%%%%%%%%%%%%%%%%%%%%%%%%%%%%%%%%%
%%
\\
\\
\noindent {\bf Step 3} %: Moser equation}
\\
We proceed in two parts.  First we consider the case on $\mathcal{U}_0 \setminus Z$, where the argument is very similar as in dimension 4.  Then, we focus on the symplectic subspace inside $Z$.  

\noindent On $\mathcal{U}_{0} \setminus Z$, where $\omega_t$ is near-symplectic, introduce vector fields $X_t$  defined by 
\begin{equation}\label{Moser}
 \iota_{X_t} \omega_t + \lambda_t = 0 
\end{equation}
We want to show that, on the tubular neighbourhood, $X_t$ shrinks as it approaches $Z$.  On the other four complementary directions defined by $K_t$, we have that $\nabla \lambda_t (u) = 0$ for all non-zero vectors $u\in N_{Z_{0}}$, since $\lambda_t$ vanishes to the second order along Z. Furthermore, $\omega_t$ degenerates on $K_t$, and a Taylor expansion shows that $\nabla \omega_t \not= 0 $ on $K$, so that $|X^{t}_{\textnormal{K}}(x)| \leq C |x|$ for a constant $C$, as shown in \cite{Perutz}.

On the symplectic subspace in $Z$ we have, $\lambda_t|_{\textnormal{Symp}_Z } = 0$, but the restriction $\omega_t|_{\textnormal{Symp}_Z  }$ is non-degenerate on this subspace.  Thus, in order to satisfy equation \ref{Moser},  the vector field $X_t$ needs to vanish on $\textnormal{Symp}_Z $. In particular, the components of the vector field along the symplectic subspace satisfy  $|X^{t}_{\textnormal{Symp}}(x)| \leq c |x|$  for a constant $c$.

The family $\lbrace X_t \rbrace_{t\in [0,1]}$ generates a flow $\lbrace \psi_t \rbrace_{t\in [0,1]}$ on $\mathcal{U}_0$ outside $Z$.  A trajectory $x_s$ defined on some interval $[0, \tilde{s}]$ satisfies $\frac{d}{ds}(\log|x_s|) \geq -C  $.  Integrating over $[0, \tilde{s}]$ we obtain  $|x_{\tilde{s}}| \geq e^{C\tilde{s}} |x_0|$. This shows that the trajectory stays inside $\mathcal{U}_0 \setminus Z_0$, hence the flow $\psi_s$ is well defined.
%
%
%Since $\lambda_t$ vanishes to the second order along $Z$, then for all non-zero vectors $u\in N_{Z_{0}}$ it follows that $\nabla \lambda_t (u) = 0$. Furthermore, we know that $\omega_t$ degenerates as it approaches $Z$, so we want to argue that $X$ will also tend to $0$ as it gets closer to $Z$. 
%%
%$$     \iota_{X_t} \underbrace{\omega_t}_{\not=0} + \underbrace{\lambda_t}_{\stackrel{p\rightarrow Z}\longrightarrow 0}   = 0     $$
%%
%Along $Z$, $\nabla \omega_t(u) \not= 0 \, \forall u\in N_Z$. Thus, for the equation \ref{Mosereq} to be satisfied, then $X_t$ also needs to shrink, that is $|X_t(x)| \leq k |x|$ near $Z$ for some constant $k$ (see option 1). A trajectory $x_s$ defined on some interval $[0, \tilde{s}]$ satisfies 
%%
%$$\frac{d}{ds}(\log|x_s|) \geq -k  \qquad \qquad   (?)$$
%%
%%Integrating over $[0, \tilde{s}]$ we obtain  $|x_{\tilde{s}}| \geq e^{k\tilde{s}} |x_0|$.
%This shows that the trajectory stays inside $\mathcal{U_0} \setminus Z_0$, hence the flow $\psi_s$ is well defined.
%\\
%%
%%%%%%%%%%%%%%%%%%%%%%%%%%%%%%%%%%%%%%%%%%%%%%
%%
%%    STEP 4 
%%
%%%%%%%%%%%%%%%%%%%%%%%%%%%%%%%%%%%%%%%%%%%%%%
%%
\\
\\
\noindent {\bf Step 4}
\\
Define on $\mathcal{U}_0 \setminus Z_0$
$$\tilde{\omega}_t:= \psi_t^{*} \omega_t$$ 
and for $p\in Z_0$
$$ \tilde{\omega}_t:=\omega_{Z_{0}}$$
Moser's argument shows that $ \tilde{\omega}_t = \omega_t$ in some neighbourhood of $Z$.  The diffeomorophism $\psi_1$ is not defined on $Z$.  Extend it to $Z$ by the identity.  At the level of the singular locus, we can take the diffeomorphism to be the one from the theorem's assumption $Z_0 \approx Z_1$. This leads to a homeomorphism, which is a diffeomorphism away from $Z$. %, since we do not know if both diffeomorphism extend smoothly to each other.
Finally, set $\varphi = \psi_1$, and $\psi_1(\mathcal{U}_0)= \mathcal{U}_1 $. Then we have that $\varphi^{*}\omega_1 = \omega_0$ away from $Z$, but by assumption $\omega_1$ and $\omega_0$ agree on $Z$. \qed
%Thus, there is an isotopy $\rho\colon \mathcal{U_{0}} \times [0,1] \rightarrow $

%
%%
%%%%%%%%%%%%%%%%%%%%%%%%%%%%%%%%%%%%%%%%%%%%%%
%%
%%    LOCAL DARBOUX-TYPE THEOREM 
%%
%%%%%%%%%%%%%%%%%%%%%%%%%%%%%%%%%%%%%%%%%%%%%%
%%
%
\subsection{Proof of Corollary 1}
The following proof follows from the previous theorem and an adaptation of an argument from \cite{Perutz}. 
\begin{proof}
Let $\gamma$ be a closed interval inside the line $\varepsilon=\ker(\omega_Z)$.  Consider $\kappa:= \gamma\times B^3  \subset K:= \varepsilon \oplus N_Z$.  Identify an open subset of $Z$ with $V \times \lbrace 0 \rbrace \subset U \simeq V \times B^{3}_{0} (R)$ inside $M$, such that $\kappa \subset U$.  Denote by $z_0$ the coordinate on $\gamma$ and by $\partial_{z_{0}}$  a positively oriented vector field on $\gamma$ for the orientation determined by $\omega$.

Take a metric $g$ for which $\omega|_{\kappa}$ is self-dual.  We can find an orthonormal frame $(e_1, e_2, e_3)$ for $N_Z$ such that  $L_{-} = \textnormal{span}(e_1), L_{+} = \textnormal{span}(e_2, e_3)$.  The metric and the choice of $e_i$ provide normal coordinates $(\bar{z}, z_0, x_1, x_2, x_3)$ on a small neighbourhood of $p$  inside $U$, where $\bar{z}$ correspond to the $(2n-4)$-complementary coordinates to $z_0$ on $Z$.  Using these coordinates we can write three basis elements $\beta_i$ of $\Lambda^{2}_{+}K^{*}$.
\begin{align}
\beta_1 &= dz_0 \wedge dx_1 + dx_2 \wedge dx_3 
\nonumber
\\
\beta_2 &= dz_0 \wedge dx_2 - dx_1 \wedge dx_3 
\nonumber
\\
\beta_3 &= dz_0 \wedge dx_3 + dx_1 \wedge dx_2  
\nonumber
\end{align}
Let $\tilde{F} = \tilde{F}_{-} \oplus \tilde{F}_{+}$ be a matrix representing the linear map $F \in \textnormal{End}(N_Z)$ with respect to $(e_1, e_2, e_3)$, and let $x= (x_1, x_2, x_3)$ and $\beta = (\beta_1, \beta_2, \beta_3)^{T}$. Expand $\omega$ near $Z$ to obtain 
\begin{align}
\omega(z, x) &= \omega|_Z + \left( x\cdot \tilde{F} \cdot \beta^T + O (x^2) \right)
\nonumber
\\
&= \omega|_Z + \left( x_1 \tilde{F}_{-} \beta_1 +  (x_2, x_3) \tilde{F}_{+} {\beta_2\choose \beta_3} + O (x^2) \right)
\nonumber
\end{align}
Define on a small neighbourhood of $Z$ a family of near-symplectic forms with common singular locus $Z$ by 
$$\omega_{t} = (1-t) \omega + t\cdot (\omega|_{Z} -2 x_1 \beta_1 + x_2 \beta_2 + x_3 \beta_3 )$$
Following the same reasoning as in the proof of the previous theorem in a local setting, we can show that this is a family of near-symplectic forms with common degeneracy locus $Z$.  The next steps follow as in the previous proof. 
\end{proof}

\bibliographystyle{alpha}
\bibliography{references}
\end{document}